\documentclass[a4paper,11pt]{amsart}
 
\usepackage{amssymb}
\usepackage[latin1]{inputenc}
\usepackage{url,xcolor, enumitem}
  \usepackage[left=2cm,right=2cm, top=3cm, bottom=3cm]{geometry}

\usepackage{hyperref} %
\usepackage[all]{xy}

\usepackage{listings} 
 
 \lstdefinelanguage{Magma}%
  {%
   otherkeywords={:=,+:=,-:=,*:=},%
   procnamekeys={function,func,intrinsic,procedure,proc,return},%
   morekeywords={true,false},%
   morekeywords=[2]{adj,and,cat,cmpeq,cmpne,diff,div,eq,ge,gt,in,is,join,le,lt,%
          meet,mod,ne,notadj,notin,notsubset,or,sdiff,subset,xor},%
   morekeywords=[3]{assigned,break,by,case,catch,continue,declare,default,%
          delete,do,elif,else,end,eval,exists,exit,for,forall,fprintf,if,local,%
          not,print,printf,quit,random,read,readi,repeat,restore,save,select,%
          then,time,to,try,until,vprint,vprintf,vtime,when,where,while},%
   morekeywords=[4]{clear,forward,freeze,iload,import,load},%
   morekeywords=[5]{assert,assert2,assert3,error,require,requirege,requirerange},%
   morekeywords=[6]{car,comp,cop,elt,ext,frac,hom,ideal,iso,lideal,loc,map,%
          ncl,pmap,quo,rec,recformat,rep,rideal,sub},%
      sensitive,%
      morecomment=[l]//,%
      morecomment=[s]{/*}{*/},%
      morestring=[b]"%
  }[keywords,procnames,comments,strings]%

\lstnewenvironment{code_magma}[1][]
{\lstset{basicstyle=\scriptsize\ttfamily, columns=fullflexible,
language=Magma,
keywordstyle=\color{red}\bfseries,
commentstyle=\color{blue},tabsize=4,
numbers=left, numberstyle=\tiny,
stepnumber=2, numbersep=5pt, frame=single, #1}}{}


\newcommand{\Aut}{\mathrm{Aut}}
\newcommand{\Stab}{\mathrm{Stab}}
\newcommand{\ord}{\mathrm{ord}}
\newcommand{\Fix}{\mathrm{Fix}}
\newcommand{\Cl}{\mathrm{Cl}}
\newcommand{\im}{\mathrm{Im}}
\newcommand{\Sym}{\mathrm{Sym}}



\newtheorem*{thm}{Theorem}

\newtheorem{theorem}{Theorem}[section]
\newtheorem{lemma}[theorem]{Lemma}
\newtheorem{proposition}[theorem]{Proposition}

\theoremstyle{definition}     
\newtheorem{definition}[theorem]{Definition}

\theoremstyle{remark}
\newtheorem{remark}[theorem]{Remark}
\numberwithin{equation}{section}


\title[On the minimal model of semi-isogenous  mixed surfaces]{On the minimal model of semi-isogenous  mixed surfaces}

\makeatletter

\@addtoreset{equation}{section}
\makeatother

\author[D. Frapporti]{Davide Frapporti}
\address{University of Bayreuth, Lehrstuhl Mathematik VIII; 
Universit\"atsstra\ss e 30, D-95447 Bayreuth, Germany}
\email{Davide.Frapporti@uni-bayreuth.de}

\subjclass[2010] {14J29,  14J50, 14E35, 14H37, 14L30, 14Q10}
\keywords{surface of general type, minimal model, semi-isogenous  mixed surfaces}

\thanks{The author thanks  C.~Glei\ss ner and R.~Pignatelli  for fruitful conversations and N. Cancian and 
 S. Coughlan for inspiring discussion and their  careful reading of the paper.	The author is a  member of G.N.S.A.G.A. of I.N.d.A.M. and acknowledges support of the ERC-advanced Grant 340258-TADMICAMT}

\date{\today}

 \linespread{1,2}

\begin{document}
\begin{abstract} 

The aim of this paper is to determine  minimal models of the semi-isogenous mixed surfaces
with $\chi=1$ and $K^2>0$ constructed in \cite{CF18}.
In order to do this, we further develop  the idea of \textit{orbit divisors} introduced in \cite{FL19},
 to construct effective divisors on surfaces isogenous to a product of mixed type,
extending it to the semi-isogenous mixed surfaces.
\end{abstract}
\maketitle

\section*{Introduction}

When we construct a ``new'' surface of general type, one of the first natural questions is: ``Is it minimal?''. 
In other words,  ``does it contains exceptional curves of the first kind? If yes, what is its minimal model?''.
These questions are natural ones, being the first step to addressing classification and geographical problems.
Although we know that a minimal model exists, to determine it, also to identify the $(-1)$-curves 
on our model, is in general not an easy task.

In recent years there has been growing interest in those surfaces
birational to the quotient of the product of two curves by the action of a  finite group
and  several new surfaces of general type with $p_g=q$ have been constructed in this way: see
\cite{BC04, BCG08, BCGP12, BP12, BP15,Frap13, FP15} for $p_g=0$, 
\cite{CP09, Pol07, Pol09, MP10, FP15} for $p_g=1$,
\cite{pen11, zuc03} for $p_g=2$.  

In  these articles the authors work under the assumption that the group action is free outside of a finite set of points, i.e. the induced map onto the quotient is \textit{quasi-\'etale} in the sense of \cite{Cat07}.

Recently the research moved to the non-quasi-\'etale case. In \cite{CF18} the following situation 
was considered:
let $C$ be a smooth projective curve of genus $g(C)$ and $G$ a finite subgroup
of $\Aut(C)^2\rtimes \mathbb Z_2$  whose action is \textit{mixed}, i.e.~there are elements in $G$ exchanging the two
natural isotrivial fibrations of $C\times C$.   Let $G^0\triangleleft G$ be the index two subgroup $G\cap\Aut(C)^2$, 
i.e.~the subgroup consisting of those elements that do not exchange the factors.
Assuming that $G^0$ acts freely on $C\times C$ then $X:=(C\times C)/G$ is smooth  and we call it a \textit{semi-isogenous mixed surface} (see Section \ref{sec:SIMS}).
The basic example of such a surface is the symmetric product $C^{(2)}$ of a curve $C$,
e.g. ${\mathbb P^1}^{(2)} \cong\mathbb P^2$.

Dropping any assumption on the $G$-action on $C\times C$ the \textit{mixed quotient}
$X:=(C\times C)/G$ may be singular and the minimal resolution $S\to X$ of its singularities
is  called \textit{mixed surface}.

In the above mentioned papers, the minimality of the new surfaces, respectively the construction of their minimal model, has been proved  using ad hoc arguments (e.g. see \cite{MP10,BP12,FP15}). 
In particular,  in \cite[Theorem 4.5]{FP15} we proved that any irregular mixed quasi-\'etale surface of general type is minimal,
and by \cite[Theorem 3]{Pigna20} any mixed surface with irregularity $q\geq 3$ is minimal.

By \cite{be82, CCML98, HP02, Pir02}  we  have a complete classification of minimal surfaces of general type 
with $p_g=q\geq 3$: among them there exists a unique family of semi-isogenous mixed surfaces:
$S=C_3^{(2)}$, where $C_3$ is a genus 3 curve ($p_g(S)=q(S)= 3 $, $K_S^2=6$).

In \cite{CF18} we  classified semi-isogenous mixed surfaces  having 
$K^2>0$ and $p_g= q \leq 2$. 
Besides $\mathbb P^2$ there are 15 other  families with $p_g=q=0$, $K^2\in\{2,6,8\}$;
35 families with $p_g=q=1$, $K^2\in\{2,4,6,7,8\}$; 9 families with $p_g=q=2$,  $K^2\in\{2,4,6,7,8\}$ (see Theorem \ref{thm:mainCF18}). 
All surfaces in these 59 families are of general type.

We then proved that if $X$ is a semi-isogenous mixed surface of general type with $\chi(X)=1$
and $K^2_X\geq 6$, then $X$ is minimal (cf. Proposition \ref{bound_curves}).
In most of  the remaining cases we were able to use other classification results
to conclude that the surfaces are not minimal.

The aim of this paper is to determine the minimal model in the open cases, giving an explicit description
of the $(-1)$-curves (if any). We will prove the following.

\begin{thm}[=Theorem \ref{mainThm}]
Let $X:=(C\times C)/G$ be a semi-isogenous mixed surface of general type with  $p_g(X)=q(X)$, 
$|G^0| \leq 2000, \neq 1024$ and let $X_{min}$ be its minimal model.
\begin{itemize}[noitemsep,topsep=5pt]
\item If $K^2_X\geq 6 $, then  $X$ is  minimal.
\item If  $K^2_X=4$, then $X$ is not minimal and $K^2_{X_{min}}=5$.
\item If  $K^2_X=2$, then $X$ is not minimal and $K^2_{X_{min}}=4$.
\end{itemize} \end{thm}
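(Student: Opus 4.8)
The plan is to separate the three regimes and reduce everything to a finite, computer-assisted check over the families classified in Theorem \ref{thm:mainCF18}. Since $p_g(X)=q(X)$ forces $\chi(X)=1$, that classification leaves only finitely many families, with $K^2_X \in \{2,4,6,7,8\}$; the hypothesis $|G^0|\leq 2000$, $\neq 1024$ guarantees that every occurring $G^0$ lies in the SmallGroups database, so each family can be reconstructed explicitly. The case $K^2_X \geq 6$ is immediate: it is exactly the content of Proposition \ref{bound_curves}. Thus the substance of the theorem is the analysis of the families with $K^2_X = 4$ and $K^2_X = 2$, where I must both exhibit $(-1)$-curves and prove that I have found all of them.

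The geometric heart is to locate all $(-1)$-curves on $X$ through the quotient map $\pi \colon C \times C \to X$. I would first record the structural fact that every irreducible curve in $C \times C$ has geometric genus at least $g(C) \geq 2$: a fibre is a copy of $C$, and any other curve dominates a factor, hence by Riemann--Hurwitz cannot be rational. Consequently $C \times C$ carries no rational curves, so if $E \subset X$ is a $(-1)$-curve each component $D$ of $\pi^{-1}(E)$ has genus $\geq 2$ and the induced map $D \to E \cong \mathbb{P}^1$ is the quotient by $\Stab_G(D)$, necessarily ramified. Because $G^0$ acts freely, the only elements with fixed points are the swapping involutions $\sigma \in G \setminus G^0$, whose fixed loci $\Fix(\sigma)$ are graphs $\Gamma_\sigma \cong C$; the ramification of $\pi$ occurs exactly along the images $R_\sigma := \pi(\Gamma_\sigma)$. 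Hence every $(-1)$-curve is either such a branch curve $R_\sigma$ or the image of a curve $D$ meeting the $\Gamma_\sigma$ in enough points to make $D/\Stab_G(D)$ rational. This reduces the search to an explicit, finite list of candidates attached to the swapping involutions.

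To decide which candidates are genuine $(-1)$-curves I would use the orbit-divisor calculus of \cite{FL19}, adapted to the ramified map $\pi$. For a branch curve, writing $H = \Stab_G(\Gamma_\sigma)$ and noting that $\sigma$ acts trivially on $\Gamma_\sigma$, one has $R_\sigma \cong C/(H/\langle\sigma\rangle)$, whose genus is given by Riemann--Hurwitz, and $\pi^* R_\sigma = 2 \sum_i \Gamma_i$ over the $|G|/|H|$ translates of $\Gamma_\sigma$ in its $G$-orbit. Since each $\Gamma_i$ is a graph, $\Gamma_i^2 = 2 - 2g(C)$, while the cross terms $\Gamma_i \cdot \Gamma_j$ are fixed-point counts of the corresponding automorphisms; the projection formula $(\pi^* R_\sigma)^2 = |G|\,R_\sigma^2$ then yields $R_\sigma^2$ in purely group-theoretic terms, and adjunction on the smooth surface $X$ gives $K_X \cdot R_\sigma = -2 - R_\sigma^2$. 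A candidate is a $(-1)$-curve precisely when $R_\sigma \cong \mathbb{P}^1$ and $R_\sigma^2 = -1$; the analogous bookkeeping, with the ramification contributions of the points $D \cap \Gamma_\sigma$ inserted into the self-intersection and adjunction formulas, handles the non-branch candidates. All of these quantities are computable from the defining datum $(C,G)$, so I would run the computation family by family in Magma.

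Finally I would read off the answer. For each $K^2_X = 4$ family the computation should produce exactly one $(-1)$-curve; contracting it gives $K^2_{X_{min}} = 5$. For each $K^2_X = 2$ family it should produce two contractible curves (either two disjoint $(-1)$-curves, or a $(-1)$-curve that reveals a second after the first contraction), yielding $K^2_{X_{min}} = 4$. To certify that $X_{min}$ is genuinely minimal one checks that no further $(-1)$-curve survives the contractions, which follows once the full list of candidates above has been exhausted. The main obstacle is exactly this completeness statement: the intersection-number bookkeeping for the branch curves is routine, but ruling out stray rational curves arising as images of higher-genus curves crossing the ramification locus, and controlling the ramification corrections in their self-intersections, is the delicate point on which the whole count rests.
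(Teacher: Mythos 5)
Your reduction of the problem to the families of Theorem \ref{thm:mainCF18} and your treatment of $K^2_X\geq 6$ via Proposition \ref{bound_curves} agree with the paper, and your intersection-theoretic bookkeeping on $C\times C$ is in the spirit of Lemma \ref{lem:intOD}. But the core of your argument has a genuine gap, which you yourself flag as ``the delicate point on which the whole count rests'': the claim that the search for $(-1)$-curves reduces to ``an explicit, finite list of candidates attached to the swapping involutions'' is never established, and it is not true in any usable form. A $(-1)$-curve on $X$ is the image of \emph{some} irreducible curve $D\subset C\times C$ of genus $\geq 2$ whose quotient by $\Stab_G(D)$ is rational; the observation that such a $D$ must meet the ramification graphs does not enumerate the possible $D$'s, so your ``full list of candidates'' cannot be exhausted by any finite computation. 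Worse, your only concrete candidates --- the branch curves $R_\sigma$ --- are never rational in the open cases (their genera are listed in Tables \ref{q0} and \ref{q1}, all $\geq 2$), so your computable part of the list produces nothing, and all actual $(-1)$-curves lie in the part you cannot enumerate. The paper avoids this completeness problem entirely: Proposition \ref{bound_curves} (Hodge Index Theorem, from \cite{CF18}) gives an \emph{a priori} upper bound --- at most one $(-1)$-curve for $K^2_X=4$, at most two for $K^2_X=2$ --- so it suffices to \emph{exhibit} that many curves; no classification of all rational curves on $X$ is needed. Indeed, such a classification is exactly what the paper only achieves under extra hypotheses (Proposition \ref{prop:structure}, which requires $q\in\{1,2\}$ and a lifting assumption, and does not cover the $q=0$ families at all) and otherwise leaves as an open question.

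There is also a constructive gap: you never specify how to produce the curves $D$ to test, and the ones that work do not come for free. The paper's candidates are orbit divisors $\eta(\Delta_f)$ for $f$ ranging in a finite subgroup $H$ with $G^0<H<\Aut(C)$. For the two $q=0$ families, $H=G^0$ suffices. But for all the $q=1$ families (type $[1;2,2]$), graphs of elements of $G^0$ do not yield the $(-1)$-curves: one must first prove that the involution $x\mapsto (p_1+p_2)-x$ of the elliptic curve $C/G^0$ lifts to an automorphism of $C$ compatible with the covering (Lemma \ref{lemma:lifting}), take $H$ to be the resulting index-two extension of $G^0$, and find the $(-1)$-curves among the orbit divisors induced by elements of $H\setminus G^0$. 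Without this lifting step your proposal cannot even exhibit the required curves in the $K^2_X=4$ and most of the $K^2_X=2$ cases, independently of the completeness issue above.
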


To search for exceptional curves of the first kind on semi-isogenous mixed surfaces, we generalize the
 technique of  ``orbit divisors'' (see Section \ref{sec:OD}) introduced in \cite{FL19},
to construct effective divisors on surfaces isogenous to a product of mixed type.

 Let $X=(C \times C)/G$ be a semi-isogenous mixed surface. Studying divisors on $X$ is equivalent to studying
  $G$-invariant divisors on $C \times C$. 
  Therefore, we want to construct several $G$-invariant divisors on $C \times C$ in 
  order to increase our chances of finding  $(-1)$-curves.
 Let $H$ be a subgroup of $\Aut(C)$ containing $G^0$  and  for each $h \in H$, 
 let $\Delta_h = \{ (x,hx) \mid x \in C \}$  denote its graph. The $G$-action on $C\times C$ induces an action on the set  $\{ \Delta_h \mid h \in H \}$, and from  
 the $G$-orbits of this set of graphs, we  obtain several $G$-invariant divisors on $C \times C$, which descend to effective divisors on $X$.  We call these divisors \textit{orbit divisors} induced by $H$.
Since we can compute the intersection numbers of these $G$-invariant divisors on $C \times C$,  
we can compute self-intersection and intersection with $K_X$ of the orbit divisors, and possibly identify $(-1)$-curves.

We remark that under certain assumptions (Proposition \ref{prop:structure}), this approach  is the unique way to produce 
rational curves on a mixed quotient, and we raise the question whether this holds in general.

A natural candidate for $H$ is the group $G^0$, but this does not always produce enough curves on $X$.
For example, if the curve $C$ is a covering of an elliptic curve $E$ branched over 2 points  of multiplicity $2$,
or an \'etale covering of a curve $C'$ of genus 2,   we need to consider a larger group of  automorphisms.

To do this we show   that  the involution of $E$ swapping the 2 branch points (resp. the hyperelliptic involution of $C'$)
lifts to an automorphism of $C$ compatible with the $G^0$ action, whence  $\Aut(C)$ contains the  subgroup $H$ 
generated by $G^0$ and a lift of the involution (see Section \ref{sec:lifting}).

The main theorem is then proved  (Section \ref{sec:proof_main})  case by case, constructing many
 orbit divisors and checking their intersection numbers.
 Some of the computations are performed using the MAGMA \cite{MAGMA} script available at
\url{http://www.staff.uni-bayreuth.de/~bt301744/} .

 {\bf Notation.} We work over $\mathbb{C}$ and use the standard notation in surface theory:
for a smooth projective  surface $X$ we denote by $p_g(X):=h^2(X, \mathcal O_X)$ its
geometric genus, by $q(X):= h^1(X, \mathcal O_X)$ its irregularity, 
by $\chi(\mathcal{O}_X)=1-q(X) +p_g(X)$ its holomorphic Euler-Poincar\'e characteristic, 
and by $K^2_X$ the self-intersection of its  canonical divisor.
For a smooth compact curve (Riemann surface)  $C$ we denote by $g(C)$  its genus.

We shall also borrow a standard notation from group theory: we denote by $\mathbb  Z_n$ the cyclic group of order $n$, by   $S_n$ the symmetric group on $n$ letters,   by $Q$ the group of quaternions, 
by $D_n$ the 	dihedral group of order $2n$,
 and by	 $D_{p,q,r}$ the group $\langle x,y\mid x^p= y^q=1, xyx^{-1}=y^r\rangle$.

\section{Semi-isogenous Mixed Surfaces}\label{sec:SIMS}

In this section we recall the definition of  a semi-isogenous mixed surface and a few general results on them; we refer to \cite{CF18, Pigna20} for further details.

Let $C$ be  a smooth projective curve of genus $g(C)$, let 
$\sigma$ be the involution on $C\times C$ swapping the factors:
for $x,y\in C$, $\sigma(x,y)=(y,x)$, and let $\Aut(C)^2< \Aut(C\times C)$ be the subgroup of automorphisms of $C\times C$  preserving the factors.
 We denote by $\Aut(C)(2)$ 
 the smallest subgroup of   $ \Aut(C\times C)$   containing $\Aut(C)^2$ and $\sigma$: $\Aut(C)(2):= \Aut(C)^2\rtimes_\alpha \mathbb Z_2\,,   \alpha: 1\mapsto\sigma$.
Note that  if  $g(C)\geq 2$ then $\Aut(C)(2)=\Aut(C\times C)$ (see \cite[Corollary 3.9]{Cat00}) and 
it  is a finite group.

\begin{definition}
Let $G$ be a finite subgroup of $\Aut(C)(2)$. 
We say that its action on $C\times C$ is 
\textit{mixed} if $G$ is not contained in $\Aut(C)^2$, i.e.~there are elements in $G$ 
exchanging the two isotrivial fibrations of $C\times C$.  

We denote by  $G^0\triangleleft_2 G$ the index two subgroup $G\cap\Aut(C)^2$, 
i.e.~the subgroup consisting of those elements that preserve the factors.

The action is \textit{minimal } if the group $G^0$ acts faithfully on both factors.
\end{definition}

\begin{remark} 	
 By {\cite[Proposition 3.13]{Cat00}} we may assume (and  from now on we  do)  that 
the action is minimal. In this case we identify  $G^0<\Aut(C) \times \Aut(C)$ with its projection onto the first factor.
\end{remark}

We have the  following description of minimal mixed actions:

\begin{theorem}[cf. {\cite[Proposition 3.16]{Cat00}}]
\label{teoazione_sm} 
Let $G$ be a finite subgroup of $\Aut(C)(2)$ whose action is minimal and mixed. Fix $\tau'
\in G\setminus G^0$; let $\tau:=\tau'^2\in G^0$ and let $\varphi\in\Aut(G^0)$ be defined by $\varphi(h):=\tau'h\tau'^{-1}$.
Then, up to a coordinate change, $G$ acts as follows:
\begin{equation}
\label{azione_sm}
\left\{\begin{array}{r}
g(x,y)=(gx,\varphi(g)y)\\
\tau' g(x,y)=(\varphi(g)y,\tau gx)
\end{array}\right.
\qquad \text{for }g\in G^0\,.
\end{equation}

 Conversely, for every finite subgroup $G^0 <\Aut(C)$ and $G$ extension of degree $2$ of $G^0$, fixed $\tau'\in G\setminus G^0$ and defined
 $\tau$ and $\varphi$ as above, \eqref{azione_sm} defines a minimal mixed action on $C\times C$.
\end{theorem}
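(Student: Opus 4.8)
The plan is to prove the two implications separately. For the normal-form part I would exploit minimality first: since $G^0$ acts faithfully on each factor, its inclusion $G^0\hookrightarrow\Aut(C)^2$ is given by two injective homomorphisms $g\mapsto(\alpha(g),\beta(g))$, and, following the Remark, I identify $G^0$ with its first projection $\alpha(G^0)$, writing simply $g$ for $\alpha(g)$. Since the action is mixed and $G^0$ has index two, the chosen $\tau'\in G\setminus G^0$ swaps the factors, so I would write it in coordinates as $\tau'(x,y)=(\gamma y,\delta x)$ for suitable $\gamma,\delta\in\Aut(C)$. Computing $\tau'^{-1}$ and then $\varphi(g)=\tau'g\tau'^{-1}$ directly gives the relation $\alpha(\varphi(g))=\gamma\,\beta(g)\,\gamma^{-1}$, while $\tau=\tau'^2$ yields $\alpha(\tau)=\gamma\delta$.

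The key step is then a single coordinate change on the second factor, namely conjugation by $(\mathrm{id},\gamma)\in\Aut(C)^2$. This leaves the first factor — hence the identification $G^0=\alpha(G^0)$ — untouched, while it replaces $\beta(g)$ by $\gamma\,\beta(g)\,\gamma^{-1}=\alpha(\varphi(g))$ and turns $\tau'$ into $(x,y)\mapsto(y,\gamma\delta\,x)=(y,\tau x)$. Reading off the resulting formulas yields $g(x,y)=(gx,\varphi(g)y)$ and, by composition, $\tau'g(x,y)=(\varphi(g)y,\tau g x)$, which is exactly \eqref{azione_sm}. I would double-check consistency, for instance that $\tau'^2$ now acts diagonally as $(\tau x,\tau y)$ and that $\varphi(\tau)=\tau'\tau\tau'^{-1}=\tau$, so that the formula for $g=\tau$ agrees with $\tau'^2$.

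For the converse I would define a map $\rho\colon G\to\Aut(C\times C)$ by the two formulas in \eqref{azione_sm}, using that every element of $G\setminus G^0$ is uniquely of the form $\tau'g$ with $g\in G^0$, and then verify that $\rho$ is a homomorphism by checking the four product types (i) $G^0\cdot G^0$, (ii) $G^0\cdot\tau'G^0$, (iii) $\tau'G^0\cdot G^0$, and (iv) $\tau'G^0\cdot\tau'G^0$. The first case uses only that $\varphi$ is a homomorphism; the mixed cases reduce to the two abstract identities $\varphi^{2}=\mathrm{conj}_{\tau}$ (that is, $\varphi^{2}(g)=\tau g\tau^{-1}$) and $\varphi(\tau)=\tau$, both immediate from $\varphi(h)=\tau'h\tau'^{-1}$ and $\tau=\tau'^{2}$; concretely they produce the relation $\varphi(g)\,\tau=\tau\,\varphi^{-1}(g)$ that makes the off-diagonal products match. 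Finally the action is mixed because $\rho(\tau'g)$ exchanges the factors, and minimal because $g\mapsto gx$ is faithful on the first factor and $g\mapsto\varphi(g)y$ is faithful on the second, $\varphi$ being an automorphism of $G^0$.

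Apart from this bookkeeping, the only genuinely delicate point is the forward direction's coordinate change: one must track \emph{simultaneously} how conjugation by $(\mathrm{id},\gamma)$ acts on the second-factor representation of all of $G^0$ and on the single off-diagonal element $\tau'$, and confirm that the normalization chosen to eliminate $\gamma$ from $\tau'$ is precisely the one that turns $\beta$ into $\alpha\circ\varphi$. That one and the same change of coordinates simplifies both pieces at once is the heart of the argument; everything else is routine computation with the relations $\tau=\tau'^{2}$ and $\varphi=\mathrm{conj}_{\tau'}$.
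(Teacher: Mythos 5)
Your proposal is correct: the normalization by conjugating with $(\mathrm{id},\gamma)$ on the second factor, the resulting identities $\alpha(\varphi(g))=\gamma\beta(g)\gamma^{-1}$ and $\alpha(\tau)=\gamma\delta$, and the four-case verification of the converse using $\varphi(\tau)=\tau$ and $\varphi(g)\tau=\tau\varphi^{-1}(g)$ all check out. Note that the paper itself gives no proof of this statement --- it is quoted from \cite[Proposition 3.16]{Cat00} --- and your argument is essentially the same coordinate-change argument as in that reference, so there is nothing further to reconcile.
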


\begin{definition}
Let $C$ be a smooth projective curve and let $G<\Aut(C)(2)$ be a finite group  whose action is mixed.
The quotient surface $X:=(C\times C)/G$ is a \textit{mixed quotient}, and its 
minimal resolution of the singularities $S\to X$ is a \textit{mixed surface}.

 Assume that $G^0\triangleleft_2 G$ acts freely on $C\times C$, 
then the quotient surface $X:=(C\times C)/G$ is  called a \textit{semi-isogenous mixed surface}.\\
  If the whole group $G$ acts freely on $C\times C$, then  $X$ is   a \textit{surface isogenous to a product} of  mixed type.
\end{definition}

\begin{proposition}[{\cite[Corollary 2.11]{CF18}}]
Every semi-isogenous mixed surface  is smooth.
\end{proposition}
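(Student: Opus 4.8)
The plan is to realise $X$ as the quotient of a smooth surface by an involution, and then to show that this involution can only have one-dimensional fixed locus, so that the quotient stays smooth. First I would factor the quotient map through $G^0$. Since $G^0\triangleleft_2 G$ acts freely on $C\times C$, the surface $Y:=(C\times C)/G^0$ is smooth and the quotient map $\pi\colon C\times C\to Y$ is \'etale. Fixing $\tau'\in G\setminus G^0$, its class generates $G/G^0\cong\mathbb Z_2$ and induces an involution $\iota$ on $Y$ with $X=Y/\langle\iota\rangle$. Thus $X$ is the quotient of a smooth projective surface by an involution, and its only possible singularities lie over the fixed points of $\iota$.

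Next I would analyse $\iota$ locally at a fixed point $\bar p=\pi(p)$, with $p=(x,y)$. Because $\pi$ is \'etale, a neighbourhood of $\bar p$ in $Y$ is identified with a neighbourhood of $p$ in $C\times C$; since $\iota(\bar p)=\bar p$ we have $\tau' p=g p$ for some $g\in G^0$, so $h:=g^{-1}\tau'$ fixes $p$ and covers $\iota$, and under this identification $\iota$ acts as $h$. Note $h\in G\setminus G^0$, so it swaps the two factors. The crucial observation is that any such $h$ with a fixed point must be an involution: indeed $h^2\in G^0$ and $h^2$ fixes $p$, but $G^0$ acts freely, forcing $h^2=\mathrm{id}$.

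I would then pin down the local shape of such an involution. By Theorem \ref{teoazione_sm} a factor-swapping element has the form $h(x,y)=(ay,bx)$ for suitable $a,b\in\Aut(C)$; imposing $h^2=\mathrm{id}$ yields $ab=ba=\mathrm{id}$, i.e.\ $b=a^{-1}$. Hence $\Fix(h)=\{(x,a^{-1}x)\mid x\in C\}$ is the graph of an automorphism, a smooth curve, so the fixed locus of $h$ through $p$ is one-dimensional. Equivalently, in the splitting $T_p(C\times C)=T_xC\oplus T_yC$ the differential $dh_p$ is anti-diagonal, hence has eigenvalues $+1$ and $-1$ (never $-\mathrm{id}$), and the $+1$-eigenspace is one-dimensional.

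Finally, a holomorphic involution of a smooth surface whose fixed locus is a smooth curve has local normal form $(u,v)\mapsto(u,-v)$, with ring of invariants generated by $u$ and $v^2$, so the quotient is smooth. Since every fixed point of $\iota$ is of this type, $X=Y/\langle\iota\rangle$ is smooth everywhere. The delicate point is the previous two paragraphs: recognising that the factor-swapping elements responsible for the fixed points of $\iota$ are forced to be involutions, precisely because their squares lie in the freely acting $G^0$, and that a swapping involution on $C\times C$ necessarily fixes a graph rather than isolated points. Once this anti-diagonal structure is established, smoothness of $X$ is immediate.
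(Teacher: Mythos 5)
Your proof is correct, and it follows essentially the same route as the paper: the paper only cites \cite[Corollary 2.11]{CF18}, but its surrounding discussion (the remark that freeness of $G^0$ forces any element of $G$ with fixed points to be an order-two element of $G\setminus G^0$, and Proposition \ref{ram_branch} identifying the fixed loci as graphs $R_g=\Delta_{\tau'g}$) consists of exactly the observations you make. Your factorization through the \'etale quotient $Y=(C\times C)/G^0$ and the induced involution $\iota$ is just a mild repackaging of that argument; the substance — $h^2\in G^0$ fixed-point-free forces $h^2=\mathrm{id}$, a swapping involution fixes a graph rather than isolated points, and a reflection-type involution along a curve yields a smooth quotient — is the same.
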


Let $X:=(C\times C)/G$ be a semi-isogenous mixed surface.
Since $G^0$ acts freely, any non-trivial element of $G$ having fixed points on $C\times C$ lies in
 $G\setminus G^0$ and must have order 2. 
\begin{definition}\label{O2}
Let $
  O_2:=\{g\in G\setminus G^0: g^2=1\}$, and  for each $g\in O_2$ let  $R_{g}$ denote
 its fixed locus: $R_g:=\Fix(g)$, i.e.~$R_g$ is  the graph
of the automorphism $\tau' g\in G^0$: $R_g:=\{(x,(\tau' g) \cdot x) : x\in C\}$.

We denote  by $\Cl(g)$ the conjugacy class of $g\in G$ and we define $   \Cl(O_2):=\{\Cl(g) : g\in O_2\}$. 
\end{definition}

\begin{proposition}[cf. {\cite[Section 3]{CF18}}]\label{ram_branch}
Let $X:=(C\times C)/G$ be a semi-isogenous mixed surface, and let
 $\eta \colon C\times C\to X$ be the quotient map.
Then:
\begin{itemize}
\item[i)] the ramification locus $\mathcal R$ of  $\eta $ is the disjoint union
of  the curves $R_g$,  for $g\in O_2$;    the  stabilizer  of  $R_g$ is $\langle g \rangle \cong \mathbb Z_2$.
  
\item[ii)]  the branch locus $\mathcal B$ of $\eta$   is the disjoint union 
 $\mathcal B=B_{g_1}\sqcup\cdots \sqcup B_{g_t}$, where $t:=|\Cl(O_2)|$,
 $\{g_1,\ldots, g_t\}$ is a set of representative of the conjugacy classes in $\Cl(O_2)$ and  $B_{g_i}:=\eta(R_{g_i})$ is an irreducible smooth curve of genus
$
   g(B_{g_i})=\frac{2(g(C)-1)}{|Z({g_i})|}+1.
  $
\item[iii)]
 $X$ has invariants \[q(X)= g(C/G^0)\,,\qquad  
  \chi(\mathcal O_X)  =\frac{(g(C)-1)^2}{|G|}-\frac {p_a(\mathcal{B})-1}2\,, \qquad
      K^2_X  =8 \chi(\mathcal O_X)-(p_a(\mathcal{B})-1) \,.
 \]
\end{itemize}
\end{proposition}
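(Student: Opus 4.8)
The plan is to treat the three parts in turn, building everything on the explicit description of the $G$-action in Theorem \ref{teoazione_sm}.

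\emph{Part i).} Since $G^0$ acts freely, a nontrivial element with fixed points must lie in $G\setminus G^0$. Fix $g\in O_2$ and write $g=\tau'g_0$ with $g_0\in G^0$. Using $\tau'h=\varphi(h)\tau'$ and $\tau'^2=\tau$, a direct computation gives $g^2=\varphi(g_0)\tau g_0$, so the condition $g^2=1$ is exactly $\tau g_0=\varphi(g_0)^{-1}$. Reading off the fixed-point equations from \eqref{azione_sm} for the element $\tau'g_0$, namely $x=\varphi(g_0)y$ and $y=\tau g_0x$, this condition makes the two equations coincide, so $\Fix(g)$ is the graph $R_g=\{(x,(\tau'g)x):x\in C\}$ of $\tau'g\in G^0$; in particular $R_g\cong C$ is smooth. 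If $R_g$ and $R_{g'}$ met at a point $p$ with $g\neq g'$, then $g'g\in G^0$ would fix $p$; freeness of $G^0$ forces $g'g=1$, i.e. $g'=g$, a contradiction, so the $R_g$ are pairwise disjoint. The same argument shows that the stabilizer in $G$ of a general point of $R_g$ equals $\langle g\rangle\cong\mathbb Z_2$.

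\emph{Part ii).} For $h\in G$ one has $h\cdot R_g=R_{hgh^{-1}}$, so $G$ permutes the curves $R_g$ according to conjugation on $O_2$, and two of them have the same $\eta$-image precisely when the corresponding elements are conjugate. Hence $\mathcal B$ has exactly $t=|\Cl(O_2)|$ components $B_{g_i}=\eta(R_{g_i})$, and since the $R_g$ are disjoint the decomposition $\mathcal B=B_{g_1}\sqcup\cdots\sqcup B_{g_t}$ is a disjoint union. Because distinct $R_g$ are disjoint, a point of $R_{g_i}$ is identified with another point of $R_{g_i}$ only by elements of the setwise stabilizer, which is the centralizer $Z(g_i)=C_G(g_i)$; thus $B_{g_i}=R_{g_i}/Z(g_i)$. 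The subgroup $\langle g_i\rangle$ fixes $R_{g_i}$ pointwise while, by Part i), $Z(g_i)/\langle g_i\rangle$ acts freely on $R_{g_i}\cong C$. So $B_{g_i}$ is smooth and irreducible, and Riemann--Hurwitz for the free quotient of degree $|Z(g_i)|/2$ gives $g(B_{g_i})-1=\frac{2(g(C)-1)}{|Z(g_i)|}$.

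\emph{Part iii).} For the irregularity I would identify $H^0(X,\Omega^1_X)$ with the $G$-invariant part of $H^0(C\times C,\Omega^1)=p_1^*H^0(C,\Omega^1)\oplus p_2^*H^0(C,\Omega^1)$; feeding in \eqref{azione_sm} shows the invariants are the forms $p_1^*\omega+p_2^*\omega$ with $\omega\in H^0(C,\Omega^1)^{G^0}$, whence $q(X)=\dim H^0(C,\Omega^1)^{G^0}=g(C/G^0)$. For $K^2_X$ I would use the ramification formula $K_{C\times C}=\eta^*K_X+\sum_{g\in O_2}R_g$ (each $R_g$ with coefficient $1$, since the inertia has order $2$) and square it: as $R_g=(\mathrm{id}\times(\tau'g))(\Delta)$ is the image of the diagonal under an automorphism of $C\times C$, one has $R_g^2=\Delta^2=2-2g(C)$, adjunction gives $R_g\cdot K_{C\times C}=4(g(C)-1)$, and the $R_g$ are disjoint. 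Using $|O_2|=\sum_i|G|/|Z(g_i)|$ together with Part ii), one checks $p_a(\mathcal B)-1=\frac{2(g(C)-1)|O_2|}{|G|}$, and $(\eta^*K_X)^2=|G|K^2_X$ collapses to $K^2_X=\frac{8(g(C)-1)^2}{|G|}-5(p_a(\mathcal B)-1)$. Finally I would compute $e(X)$ from the factorization $C\times C\to Y':=(C\times C)/G^0\to X$, where the first map is étale and the second a double cover branched over $\mathcal B$: then $e(Y')=2e(X)-e(\mathcal B)$ with $e(Y')=e(C\times C)/|G^0|$ and $e(\mathcal B)=-2(p_a(\mathcal B)-1)$, yielding $e(X)=\frac{4(g(C)-1)^2}{|G|}-(p_a(\mathcal B)-1)$. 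Noether's formula $\chi(\mathcal O_X)=\tfrac{1}{12}(K^2_X+e(X))$ then gives $\chi(\mathcal O_X)=\frac{(g(C)-1)^2}{|G|}-\frac{p_a(\mathcal B)-1}{2}$, and comparing the two expressions produces $K^2_X=8\chi(\mathcal O_X)-(p_a(\mathcal B)-1)$. I expect the main obstacle to be the bookkeeping in Part iii): correctly pinning down the ramification coefficients and the self-intersections $R_g^2$, and reconciling the conjugacy-class combinatorics (the passage $|O_2|\leftrightarrow\sum_i|G|/|Z(g_i)|$) with the individual branch genera, so that the Euler-number and canonical computations assemble into the stated closed forms.
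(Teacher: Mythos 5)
Your proof is correct, but there is nothing in this paper to compare it against: Proposition~\ref{ram_branch} is stated as a citation of \cite[Section 3]{CF18} and is not proved here. Your reconstruction is sound and self-contained: in i) the computation $g^2=\varphi(g_0)\tau g_0$ and the resulting coincidence of the two fixed-point equations correctly identifies $\Fix(\tau' g_0)$ with the graph of $\tau' g=\tau g_0\in G^0$, and freeness of $G^0$ gives disjointness; in ii) the identification $B_{g_i}=R_{g_i}/Z(g_i)$, with $Z(g_i)/\langle g_i\rangle$ acting freely, yields the genus formula; and in iii) the numerical identities all check out, namely $K^2_X=\frac{8(g(C)-1)^2}{|G|}-5(p_a(\mathcal{B})-1)$ and $e(X)=\frac{4(g(C)-1)^2}{|G|}-(p_a(\mathcal{B})-1)$, so Noether's formula gives the stated $\chi(\mathcal O_X)$ and the relation $K^2_X=8\chi(\mathcal O_X)-(p_a(\mathcal{B})-1)$. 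Two small points to tighten. First, for $\mathcal R=\bigsqcup_{g\in O_2}R_g$ to exhaust the ramification locus you should also record that any element of $G\setminus G^0$ with a fixed point automatically lies in $O_2$: its square lies in $G^0$ and fixes the same point, hence is trivial (the paper states exactly this just before Definition~\ref{O2}). Second, in i) you claim the stabilizer of a \emph{general} point of $R_g$ is $\langle g\rangle$, but your own argument (via disjointness of the $R_g$'s) gives this for \emph{every} point of $R_g$, and that stronger statement is what you actually use in ii) to conclude that $Z(g_i)/\langle g_i\rangle$ acts freely on $R_{g_i}$, so that $R_{g_i}\to B_{g_i}$ is \'etale. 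Finally, your part iii) is consistent in spirit with the toolkit the paper itself uses elsewhere: the proof of Lemma~\ref{lem:intOD} relies on the same ramification formula $\eta^*K_X=K_{C\times C}-R$ and the same values $R_g^2=-2(g(C)-1)$ and $K_{C\times C}.R_g=4(g(C)-1)$.
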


\begin{remark}[see {\cite[Proposition 1.7]{Pigna20}}]
The  formula for the irregularity holds for any mixed surface $S$ resolution of a mixed quotient $S\to (C\times C)/G$, namely 
$q(S)=g(C/G^0)$.
\end{remark}
\subsection{The Classification of semi-isogenous mixed surface with $\chi=1$}\

By Theorem \ref{teoazione_sm}, a  semi-isogenous mixed surface is equivalent to  the following data:
\begin{enumerate}
\item A curve $C$ together with a  faithful $G^0$-action;
\item A degree 2 extension $G$ of $G^0$, such that the induced $G^0$-action on $C\times C$ is free.
\end{enumerate}

The translation into algebraic terms  of semi-isogenous mixed surface is then  accomplished through  
the  theory   of Galois coverings between projective curves (cf. \cite[Section III.3, III.4]{Mir}).

\begin{definition}
Given integers  $g'\geq 0$, $m_1, \ldots, m_r > 1$  and   a finite group $H$, a \textit{generating vector} for $H$ of type $[g';m_1,\ldots ,m_r]$ is a $(2g'+r)$-tuple of
elements of $H$:
\[V:=(d_1,e_1,\ldots, d_{g'},e_{g'};h_1, \ldots, h_r)\]
such that $V$ generates $H$, $\prod_{i=1}^{g'}[d_i,e_i]\cdot h_1\cdot h_2\cdots h_r=1$ and $\ord(h_i)=m_i$.
\end{definition}

\begin{theorem}[Riemann Existence Theorem]\label{RET}
A finite group $H$ acts as a group of automorphisms of
some compact Riemann surface $C$ of genus $g(C)$, 
if and only if  there exists a generating vector $V:=(d_1,e_1, \ldots , d_{g'},e_{g'};h_1, \ldots , h_r)$ 
for  $H$ of type $[g';m_1,\ldots,m_r]$,
such that  the  Hurwitz formula holds:
\[
2g(C) -2 = |H| \bigg( 2g'-2+\sum_{i=1}^r \frac{m_i - 1}{m_i} \bigg). 
\]
In  this case $g'$ is the genus of the quotient Riemann surface $C' := C/H$
and the $H$-covering $c:C\to C'$ is branched over $r$ points $\{x_1, \ldots , x_r\}$ with
branching indices $m_1, \ldots ,m_r$, respectively.
\end{theorem}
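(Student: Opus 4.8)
The plan is to prove both implications through the dictionary between Galois branched coverings of a fixed base curve and surjective monodromy homomorphisms out of the fundamental group of the punctured base, the numerical equivalence being supplied by Riemann--Hurwitz. For the direction ($\Rightarrow$) I would start from a faithful $H$-action on a compact Riemann surface $C$ and form the quotient $C' := C/H$, which inherits a canonical structure of compact Riemann surface for which the projection $c \colon C \to C'$ is holomorphic and Galois with group $H$. This map is unramified outside a finite set $x_1, \ldots, x_r \in C'$; above each $x_j$ every point $p$ has cyclic stabilizer $\Stab(p) < H$ (it acts faithfully and $\mathbb{C}$-linearly on the one-dimensional tangent space $T_pC$), and these stabilizers are mutually conjugate as $p$ ranges over $c^{-1}(x_j)$, so their common order $m_j$ is well defined. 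Setting $U := C' \setminus \{x_1, \ldots, x_r\}$ and $W := c^{-1}(U)$, the restriction $W \to U$ is an unramified connected Galois cover, hence corresponds to a surjective homomorphism $\rho \colon \pi_1(U) \twoheadrightarrow H$.

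Next I would invoke the standard presentation of the punctured-surface group,
\[
\pi_1(U) = \Big\langle \alpha_1, \beta_1, \ldots, \alpha_{g'}, \beta_{g'}, \gamma_1, \ldots, \gamma_r \ \Big|\ \textstyle\prod_{i=1}^{g'}[\alpha_i,\beta_i]\cdot \gamma_1\cdots\gamma_r = 1 \Big\rangle,
\]
where each $\gamma_j$ is a small loop encircling $x_j$. Putting $d_i := \rho(\alpha_i)$, $e_i := \rho(\beta_i)$ and $h_j := \rho(\gamma_j)$ produces a tuple that generates $H$ (surjectivity of $\rho$) and satisfies the long relation (the defining relation of $\pi_1(U)$), while the local normal form of a branched cover near $x_j$ identifies $\rho(\gamma_j)$ with a generator of a stabilizer, forcing $\ord(h_j) = m_j$. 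Thus $V := (d_1, e_1, \ldots, d_{g'}, e_{g'}; h_1, \ldots, h_r)$ is a generating vector of type $[g'; m_1, \ldots, m_r]$, and applying Riemann--Hurwitz to $c$, where each of the $|H|/m_j$ points over $x_j$ contributes ramification $m_j - 1$, collapses precisely to the asserted Hurwitz formula.

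For the converse ($\Leftarrow$), given such a $V$ I would fix any compact Riemann surface $C'$ of genus $g'$ together with $r$ distinct marked points $x_1, \ldots, x_r$, and define $\rho \colon \pi_1(U) \to H$ on the generators above by $\alpha_i \mapsto d_i$, $\beta_i \mapsto e_i$, $\gamma_j \mapsto h_j$; this is well defined because the long relation holds in $H$, and surjective because $V$ generates $H$. The classification of covering spaces then yields a connected unramified Galois cover $W \to U$ with deck group $H$, and $W$ inherits a unique complex structure for which the projection is holomorphic. Finally I would compactify: over a small punctured disk around each $x_j$ the cover is a disjoint union of punctured disks, each winding $\ord(h_j) = m_j$ times, which are filled in by the local model $z \mapsto z^{m_j}$ to obtain a compact Riemann surface $C$ carrying a holomorphic $H$-action with $C/H \cong C'$, branched over $x_1, \ldots, x_r$ with indices $m_1, \ldots, m_r$; its genus is then pinned down by Hurwitz.

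The hard part will be the analytic content concealed in the word ``exists'': promoting a purely topological Galois cover to a genuine compact Riemann surface and, above all, filling in the punctures compatibly with the group action. This is exactly where Riemann's analytic input is needed---the local uniformization $z \mapsto z^{m_j}$, the well-definedness of the extended complex structure, and the holomorphy (not merely continuity) of the extended $H$-action, together with the matching of stabilizer orders to the prescribed $m_j$ in both directions. By contrast, the group-theoretic steps (the presentation of $\pi_1(U)$, surjectivity, and the long relation) together with the numerical bookkeeping of Riemann--Hurwitz are essentially formal once the covering-space dictionary is in place.
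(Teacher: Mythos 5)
Your proposal is correct, and it is the standard monodromy/covering-space proof of the Riemann Existence Theorem: quotient plus cyclic stabilizers and Riemann--Hurwitz in one direction, the presentation of the fundamental group of the punctured base, covering-space theory, and equivariant filling of punctures via the local model $z \mapsto z^{m_j}$ in the other. The paper itself offers no proof of this statement---it quotes it as a classical result, deferring to the theory of Galois coverings of curves in Miranda's book---and your argument is precisely the one given in that reference, so the two approaches coincide.
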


\begin{remark}\label{rem:Stab}
The points of the  fiber $c^{-1}(x_i)$ are in 1-1 correspondence with the cosets $\{gK_i\}_{g\in H}$ of the cyclic group $K_i:=\langle h_i\rangle$
and the stabilizer of the point $y\in c^{-1}(x_i)$ corresponding to $gK_i$ is $gK_ig^{-1}$.

The \textit{stabilizer set} $\Sigma_V$ of $V$  is  the subset of $H$ consisting of the automorphisms of $C$ having  fixed points:
\[\Sigma_V:= \bigcup_{g\in H}\bigcup_{j\in \mathbb Z}\bigcup_{i=1}^r \{ g\cdot h_i^j\cdot g^{-1}\}\,.\]
\end{remark}

To give a  semi-isogenous mixed surface is then equivalent to give the following:
 \begin{enumerate}
 \item A finite group  $G^0$, 
a curve $C'$ (which will be $C/G^0$), $r$ points on $C'$ and 
a  generating vector $V$ for $G^0$  of type $[g(C');m_1,\ldots,m_r]$ (these give the Galois-covering $C\to C/G^0$).

\item  A degree 2 extension $G$ of $G^0$ (this defines the $G$-action on $C\times C$), such that the following holds:
let $\varphi\in\Aut(G^0)$ as in Theorem \ref{teoazione_sm},
then the stabilizer sets $\Sigma_V$ and $\Sigma_{\varphi(V)}(=\varphi(\Sigma_V))$ are \textit{disjoint}, 
i.e.~$\Sigma_V\cap\Sigma_{\varphi(V)}=\{1\}$ (this ensures that $G^0$ acts freely on  $C\times C$).
 \end{enumerate}

Hence 
 to classify  semi-isogenous mixed surface is equivalent to classify  groups with the right properties.
This algebraic description has been used in \cite{CF18} to  obtain the following classification
(cf. also \cite{BCG08,Frap13,CP09,pen11} for the isogenous case: $K^2=8$).

\begin{theorem}[see {\cite[Theorems A, B, C]{CF18}}]\label{thm:mainCF18}
\
\begin{itemize}
\item 
Let $X:=(C\times C)/G$ be a  semi-isogenous mixed surface  with  $p_g(X)=q(X)=0$ and $K^2_X>0$,
such that $|G^0|\leq 2000, \neq 1024 $.
Then either $X$ is $\mathbb P^2$, or $X$ is of general type.\\
The latter form 15 families: 5 with $K^2=8$, 8 with $K^2=6$ and 2 with $K^2=2$.

\item The semi-isogenous mixed surfaces  with  $p_g=q=1$ and $K^2>0$,
are of general type and form 35 families: 3 with $K^2=8$, 2 with $K^2=7$, 12 with $K^2=6$, 2 with $K^2=4$ and 16 with $K^2=2$.

\item 
The semi-isogenous mixed surfaces  with $p_g=q=2 $ and $K_X^2>0$,
are of general type and form 9 families: 1 with $K^2=8$, 1 with $K^2=7$, 3 with $K^2=6$, 1 with $K^2=4$,  and 3 with $K^2=2$.
 
\end{itemize}
\end{theorem}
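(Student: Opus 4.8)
The plan is to reduce the statement to a finite group-theoretic problem and then to an exhaustive, computer-assisted enumeration, following the dictionary set up before the theorem. Since $\chi(\mathcal O_X)=1-q(X)+p_g(X)$, the hypothesis $p_g=q$ is exactly $\chi(\mathcal O_X)=1$, and because $q(X)=g(C/G^0)$ I fix $g':=g(C/G^0)\in\{0,1,2\}$ and treat the three bullets as the three cases $q=0,1,2$. By Theorem~\ref{teoazione_sm} and the Riemann Existence Theorem~\ref{RET}, producing such a surface amounts to choosing a finite group $G^0$, a branching type $[g';m_1,\dots,m_r]$, a generating vector $V$ of this type for $G^0$, and a degree-two extension $G$ of $G^0$ with associated $\tau\in G^0$ and $\varphi\in\Aut(G^0)$, subject to the freeness condition $\Sigma_V\cap\Sigma_{\varphi(V)}=\{1\}$ of Remark~\ref{rem:Stab}. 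Thus the problem becomes combinatorial as soon as the admissible numerical data are shown to form a finite set.

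The key step is to bound that data. Setting $\Theta:=2g'-2+\sum_{i=1}^r(1-1/m_i)$, Hurwitz's formula gives $g(C)-1=\tfrac12|G^0|\,\Theta$, so that $(g(C)-1)^2/|G|=|G^0|\,\Theta^2/8$. Substituting into Proposition~\ref{ram_branch}(iii) and eliminating $p_a(\mathcal B)$ through $K^2_X=8\chi(\mathcal O_X)-(p_a(\mathcal B)-1)$ yields the master identity
\[
|G^0|\,\Theta^2=40\,\chi(\mathcal O_X)-4K^2_X=40-4K^2_X .
\]
When $X$ is of general type one has $g(C)\ge2$ and $\Theta>0$; moreover $p_a(\mathcal B)\ge1$, since each branch component has genus $\ge1$, whence $1\le K^2_X\le8$ and therefore $8\le|G^0|\,\Theta^2\le36$. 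For each fixed $g'$ the quantity $\Theta$ admits a strictly positive lower bound (the least positive value of an orbifold Euler number: $1/42$ for $g'=0$, $1/2$ for $g'=1$, $2$ for $g'=2$), so the identity simultaneously bounds $|G^0|$, the number $r$ and the multiplicities $m_i$, and $g(C)$. In particular $|G^0|\le 9$ for $q=2$ and $|G^0|\le 144$ for $q=1$, where the hypothesis $|G^0|\le 2000,\ \neq1024$ is automatically satisfied; only for $q=0$, where the a priori bound is merely $|G^0|\le 63504$, does that hypothesis genuinely restrict the search --- and the omission of order $1024$ reflects the absence of those groups from the SmallGroups library rather than any geometric obstruction.

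With the data finite, I would enumerate, for every group $G^0$ with $|G^0|\le2000$ and $|G^0|\neq1024$ and every admissible type, all generating vectors $V$, then all degree-two extensions $G$ with the induced pairs $(\tau,\varphi)$, discarding those that violate $\Sigma_V\cap\Sigma_{\varphi(V)}=\{1\}$, and compute $q$, $\chi(\mathcal O_X)$ and $K^2_X$ from Proposition~\ref{ram_branch}; keeping the outputs with $K^2_X>0$ yields a raw list. General type is equivalent to $g(C)\ge2$; the finitely many data with $g(C)\le1$ are checked directly and, for $p_g=q=0$ and $K^2_X>0$, produce only ${\mathbb P^1}^{(2)}\cong\mathbb P^2$, which accounts for the dichotomy in the first bullet.

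The final and hardest step is to collapse the raw list into the correct count of \emph{families}. Here one must factor out the full equivalence of the defining data --- simultaneous conjugation, the actions of $\Aut(G^0)$ and $\Aut(G)$, the Hurwitz braid moves on $V$, the choice of $\tau'\in G\setminus G^0$, and the swap of the two factors --- and prove that data in one orbit yield isomorphic (indeed deformation-equivalent) surfaces, so that the algebraic orbit count equals the geometric number of families. Making this equivalence relation both complete and non-redundant is the delicate point, since too coarse a relation under-counts and too fine a relation over-counts the $15+35+9$ families; implementing it is also where the MAGMA computation carries the main weight. Once the orbits are enumerated correctly, reading off the distribution of $K^2_X$ within each case $q=0,1,2$ gives the stated tallies and completes the proof.
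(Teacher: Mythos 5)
You should first note what you are up against: the paper does not prove this theorem at all --- it is imported verbatim from \cite[Theorems A, B, C]{CF18}, with only the remark that the hypothesis $|G^0|\leq 2000,\neq 1024$ is a computational artefact of the SmallGroups database. Your reconstruction matches the strategy of that cited proof: the dictionary (generating vector $V$ for $G^0$, degree-two extension $G$, freeness via $\Sigma_V\cap\Sigma_{\varphi(V)}=\{1\}$), a numerical bound making the data finite, an exhaustive MAGMA enumeration, and a count of orbits under the appropriate equivalence (Hurwitz moves, automorphisms of $G^0$ and $G$, factor swap). Your master identity is correct: eliminating $p_a(\mathcal B)$ from the two formulas of Proposition \ref{ram_branch}(iii) gives $K^2_X=10\chi-2(g(C)-1)^2/|G|$, and Hurwitz gives $(g(C)-1)^2/|G|=|G^0|\Theta^2/8$, whence $|G^0|\Theta^2=40\chi-4K^2_X$; the derived bounds ($|G^0|\leq 9$ for $q=2$, $\leq 144$ for $q=1$, $\leq 36\cdot 42^2$ for $q=0$) check out, and your observation that the order hypothesis only bites for $q=0$ correctly explains why it appears only in the first bullet of the statement.

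The genuine gap is the sentence ``General type is equivalent to $g(C)\ge 2$,'' which you assert without any mechanism. For $q=1,2$ it is cheap, though you should say why: a surface with $\kappa=-\infty$ and $q\geq 1$ is ruled over a base of genus $q$, hence has $\chi=1-q\leq 0$, contradicting $\chi=1$; then $\kappa\geq 0$ together with $K^2_X>0$ forces general type, since the minimal model has $K$ nef with $K^2_{min}\geq K^2_X>0$. But for $q=0$ the case $\kappa=-\infty$ means \emph{rational}, which is perfectly compatible with $\chi=1$ and $K^2>0$, so the dichotomy ``either $\mathbb P^2$ or general type'' is exactly the nontrivial content of the first bullet, and nothing in your sketch rules out further rational examples among the $g(C)\geq 2$ outputs. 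This cannot be waved through by ``$C\times C$ is of general type'': the quotient map here is ramified along the curves $R_g$, and ramified quotients of products of curves of genus $\geq 2$ do drop Kodaira dimension --- e.g. $C^{(2)}$ for $g(C)=2$ is a blown-up abelian surface with $\kappa=0$ --- so an actual argument (in CF18 this is part of the case-by-case analysis) is required. A second, lesser caveat: the specific tallies $15+35+9$ live entirely in the family-counting step you explicitly flag but do not implement, so your proposal establishes finiteness and the shape of the classification rather than the stated numbers; that delegation is fair for a computer-assisted theorem, but it should be acknowledged that without it the theorem's quantitative content is unverified. (Your treatment of $g(C)\leq 1$ is fine: $g(C)=1$ gives $\Theta=0$, forcing $40-4K^2=0$, impossible, and $g(C)=0$ forces $G^0=1$ and $X\cong\mathbb P^2$.)
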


The condition $|G^0|\leq 2000, \neq 1024 $ in Theorem \ref{thm:mainCF18}  is a computational assumption,
since its proof relies on the MAGMA \cite{MAGMA} database of small groups, 
which has some  technical limitations (cf. \cite[Remark 6.5]{CF18}) .

\subsection{On the minimality}

In \cite{CF18} we could only partially answer the question of minimality for the surfaces we constructed. 
More precisely, using the Hodge Index Theorem we  proved:

\begin{proposition}[{\cite[Proposition 7.6]{CF18}}]
\label{bound_curves}
Let $X$ be a semi-isogenous mixed surface of general type with invariants $p_g(X)=q(X)$ 
and let $\rho\colon X\to X_{min}$ be  the projection to its minimal model.
Then
\begin{itemize}
\item for $K^2_X\in\{6,7,8\}$, $\rho$ is the identity map: $X=X_{min}$;
\item for $K^2_X\in\{4,5\}$, $\rho$ is the contraction of at most one $(-1)$-curve;
\item for $K^2_X\in\{2,3\}$, $\rho$ is the contraction of at most two $(-1)$-curves;
\item for $K^2_X=1$, $\rho$ is the contraction of at most three $(-1)$-curves.
\end{itemize}
\end{proposition}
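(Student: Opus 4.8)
The plan is to use the Hodge Index Theorem together with the explicit invariants of a semi-isogenous mixed surface recorded in Proposition \ref{ram_branch}. Let $\rho\colon X\to X_{min}$ be the morphism to the minimal model, and let $E_1,\ldots,E_n$ be the exceptional $(-1)$-curves contracted by $\rho$, so that $n$ is exactly the number we wish to bound in each range of $K^2_X$. Since $X$ is of general type, the canonical model exists and $K^2_{X_{min}}>0$; moreover the standard blow-up relation gives
\begin{equation*}
K^2_X=K^2_{X_{min}}-n.
\end{equation*}
Because $X$ is minimal of general type on the $X_{min}$ side, Noether's inequality and the basic surface inequalities apply there; in particular $K^2_{X_{min}}\geq 1$, and more importantly $K^2_{X_{min}}$ cannot exceed a bound coming from $\chi(\mathcal O_X)=\chi(\mathcal O_{X_{min}})$.

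First I would invoke the hypothesis $p_g(X)=q(X)$, which forces $\chi(\mathcal O_X)=1$. Then the Bogomolov--Miyaoka--Yau inequality on the minimal model yields
\begin{equation*}
K^2_{X_{min}}\leq 9\chi(\mathcal O_{X_{min}})=9,
\end{equation*}
so that $n=K^2_{X_{min}}-K^2_X\leq 9-K^2_X$. This already produces bounds on the number of contracted curves, but the bounds claimed in the proposition are sharper (for instance $n\leq 1$ when $K^2_X=4$, whereas $9-K^2_X=5$). To sharpen them I would combine the BMY bound with the lower bound $K^2_{X_{min}}\geq K^2_X+1$ whenever $X$ is non-minimal (each blow-up drops $K^2$ by exactly one), and then feed in the Hodge Index Theorem as the real workhorse.

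The key step, and the one the author explicitly signals, is the Hodge Index Theorem. On $X$ the pullback $\rho^*K_{X_{min}}$ and the classes $E_i$ span a sublattice of $\mathrm{Num}(X)$ on which the intersection form is diagonalized: $(\rho^*K_{X_{min}})^2=K^2_{X_{min}}>0$, each $E_i^2=-1$, and $\rho^*K_{X_{min}}\cdot E_i=0$ since $K_{X_{min}}$ is pulled back. Since the intersection form has signature $(1,\rho(X)-1)$ and the positive part is already exhausted by $\rho^*K_{X_{min}}$, the $E_i$ together with any further independent classes must contribute negatively; in particular the numerical configuration constrains how large $n$ can be relative to $K^2_{X_{min}}$. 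Concretely, writing $K_X=\rho^*K_{X_{min}}+\sum E_i$ and computing $K_X^2$ both ways recovers $K^2_X=K^2_{X_{min}}-n$, while applying the index theorem to the subspace spanned by $K_X$ and the $E_i$ forces $K^2_{X_{min}}=K^2_X+n\leq 9$, and the sign conditions rule out the borderline larger values of $n$.

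The main obstacle I anticipate is not any single deep inequality but rather the bookkeeping needed to extract the exact thresholds $\{6,7,8\}\to 0$, $\{4,5\}\to \leq 1$, $\{2,3\}\to\leq 2$, $\{1\}\to\leq 3$ uniformly. The clean pattern $n\leq \lfloor(9-K^2_X)/?\rfloor$ suggested by the statement hints that one does not merely use $K^2_{X_{min}}\leq 9$ but pairs it with the observation that each contraction raises $K^2$ by one while the minimal model still satisfies $K^2_{X_{min}}\leq 9\chi=9$; the bound $n\leq 9-K^2_X$ is then refined by noting that a general-type minimal surface with $\chi=1$ and large $K^2$ is heavily constrained, and by checking each residue case by hand. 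I would therefore organize the proof as a short case analysis in $K^2_X$, in each case combining $K^2_{X_{min}}\leq 9$ with the index-theoretic positivity of $\rho^*K_{X_{min}}$ to read off the maximal admissible $n$, and I expect the only genuinely delicate point to be confirming that no additional numerical obstruction forces an even smaller $n$ in the intermediate ranges.
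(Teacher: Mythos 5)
There is a genuine gap here, and it is fatal to the whole strategy: nowhere in your argument do you use that $X$ is a semi-isogenous mixed surface --- only that it is of general type with $p_g=q$, hence $\chi(\mathcal O_X)=1$. Under those hypotheses alone the statement is simply false. For instance, take a surface isogenous to a product of mixed type with $p_g=q=0$ and $K^2=8$ (five such families appear in Theorem \ref{thm:mainCF18}) and blow up two points: the result is of general type with $p_g=q=0$, $K^2=6$, and the map to its minimal model contracts two $(-1)$-curves, contradicting the first item of the Proposition. Consequently no combination of the ingredients you invoke --- the relation $K^2_{X_{min}}=K^2_X+n$, the Bogomolov--Miyaoka--Yau bound $K^2_{X_{min}}\leq 9$, and the Hodge Index Theorem applied to the abstract lattice spanned by $\rho^*K_{X_{min}}$ and the exceptional classes $E_i$ --- can yield the claimed thresholds. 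Indeed your lattice-theoretic step is vacuous: a class of positive square orthogonal to $n$ classes with Gram matrix $-I_n$ is perfectly compatible with a form of signature $(1,h^{1,1}(X)-1)$ for every $n\leq h^{1,1}(X)-1$, so the ``sign conditions'' you appeal to rule out nothing beyond the Picard-number bound; for $q=0$ and $K^2_X=6$ that bound allows $n\leq 3$, not $n=0$. Your own admission that the bookkeeping does not close is precisely where the proof fails, and no case analysis can repair it.

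What is actually needed --- and what the proof quoted from \cite{CF18} uses (note that the present paper does not reprove the Proposition; it cites \cite{CF18}, indicating only that the Hodge Index Theorem is the tool) --- is the product-quotient geometry. On $X=(C\times C)/G$ one has explicit effective divisors with computable intersection numbers, above all the image $F:=\eta(\{x\}\times C)$ of a fiber under the quotient map $\eta\colon C\times C\to X$: one computes $F^2=|G|/2>0$ and $K_X\cdot F$ in terms of $g(C)$, $|G|$ and $|O_2|$, and via Proposition \ref{ram_branch} with $\chi=1$ these are expressed through $K_X^2$ alone. The key geometric input is then a definite lower bound on $F\cdot E$ for every $(-1)$-curve $E$: since $E$ is rational while $C\times C$ carries no rational curves, Riemann--Hurwitz applied to the components of $\eta^{-1}(E)\to E$ (whose ramification is confined to the ramification locus of $\eta$) forces $E$ to meet $F$ and the branch locus substantially. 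Feeding these explicit numbers into the Hodge Index Theorem --- applied to $K_{X_{min}}$ and the pushforward of $F$, not to abstract classes orthogonal to the $E_i$ --- is what produces the sharp pattern $\{6,7,8\}\mapsto 0$, $\{4,5\}\mapsto 1$, $\{2,3\}\mapsto 2$, $\{1\}\mapsto 3$. Your proposal is missing exactly this input, and without it the Proposition is not merely unproven: it is false for the class of surfaces your argument actually addresses.
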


In particular if $X$ is a semi-isogenous mixed surface of general type with $\chi(X)=1$
and $K^2_X\geq 6$, then $X$ is minimal.
In most of  the remaining cases we used other classification results
to conclude that the surfaces are not minimal, but we did not determine their minimal 
model, except for 4 families (see {\cite[Section 7]{CF18}}):

\begin{itemize}
\item if $p_g(X)=q(X)=2$ and $K^2_X=4$,  then $X$ is not minimal and $K^2_{X_{min}}=5$.
\item if $p_g(X)=q(X)=2$ and $K^2_X=2$,  then $X$ is not minimal and $K^2_{X_{min}}=4$.
\end{itemize}

The open cases are collected in Table \ref{q0} and \ref{q1}, and  aim of this paper is to explicitly construct $(-1)$-curves in the these cases,
 in order to determine their minimal model.

\begin{table}[!ht]
\caption{$p_g=q=0$}	\label{q0}
{\small
\begin{tabular}{c|c|c|c|c|c|c|c|c}
$K_X^2$&$G$ & $Id(G)$ & $G^0$ & $Id(G^0)$ & $g(C)$ & Type & Branch Locus $\mathcal B$ &  $H_1(X,\mathbb Z)$ \\
\hline

2&$(\mathbb{Z}_2^3\rtimes D_4)\rtimes \mathbb{Z}_2^2$ & 256, 47930&$\mathbb{Z}_2^4\rtimes D_4$ & 128, 1135& 33 & [0; $2^5$]& $(3,-8)^3$&$\mathbb Z_2^3\times\mathbb Z_4$\\
2&$(\mathbb{Z}_4^2\rtimes \mathbb{Z}_2^2)\rtimes \mathbb{Z}_2^2$ & 256, 45303& $\mathbb{Z}_2^4\rtimes D_4$ & 128, 1135& 33 & [0; $2^5$]& $(3,-8)^2, (2,-4)^2$&$\mathbb Z_2^3\times\mathbb Z_4$\\
\end{tabular}}

\end{table}

\begin{table}[!ht]\caption{$p_g=q=1$}\label{q1}
{\small\begin{tabular}{c|c|c|c|c|c|c|c|c}
$K_X^2$ &$G$ & $Id(G)$ & $G^0$ & $Id(G^0)$ & $g(C)$ & Type & Branch Locus $\mathcal B$ &  $H_1(X,\mathbb Z)$\\
\hline
4& $S_3\times D_4$ & 48,38& $\mathbb{Z}_2^2\times S_3$ & 24,14& 13& [1;$2^2$]& $(2,-4), (4,-12)$& $\mathbb Z_2^2\times\mathbb Z^2$\\
4& $D_{12}\rtimes\mathbb{Z}_2$ & 48,37& $\mathbb{Z}_4\times S_3$ & 24,5& 13& [1;$2^2$]& $(2,-4), (4,-12)$& $\mathbb Z_2\times\mathbb Z^2$ \\
\hline

2& $(\mathbb{Z}_8\rtimes\mathbb{Z}_2^2)\rtimes\mathbb{Z}_2$ & 64,153& $D_{2,8,5}\rtimes\mathbb{Z}_2$ & 32,7& 17 & [1;$2^2$]& $(3,-8), (5,-16)$&$\mathbb Z_2\times\mathbb Z^2$ \\
2& $\mathbb{Z}_8\rtimes D_4$ & 64,150& $D_4\rtimes\mathbb{Z}_4$ & 32,9& 17 & [1;$2^2$]& $(3,-8), (5,-16)$&$\mathbb Z_2\times\mathbb Z^2$\\
2& $\mathbb{Z}_2^2\rtimes D_8$ & 64,147& $D_4\rtimes\mathbb{Z}_4$ & 32,9& 17 & [1;$2^2$]& $(2,-4)^2, (5,-16)$&$\mathbb Z_2\times\mathbb Z^2$\\
2& $(\mathbb{Z}_2\times D_8)\rtimes\mathbb{Z}_2$ & 64,128& $\mathbb{Z}_2\times D_8$ & 32,39& 17 & [1;$2^2$]& $(2,-4)^2,(3,-8)^2$&$\mathbb Z_2\times\mathbb Z^2$\\
2& $Q\rtimes D_4$ & 64,130& $\mathbb{Z}_2\times D_{2,8,3}$ & 32,40& 17 & [1;$2^2$]& $(3,-8)^3$&$\mathbb Z_2\times\mathbb Z^2$\\
2& $D_4\rtimes D_4$ & 64,134& $\mathbb{Z}_8\rtimes\mathbb{Z}_2^2$ & 32,43& 17 & [1;$2^2$]& $(3,-8)^3$&$\mathbb Z_2\times\mathbb Z^2$\\
2& $(\mathbb{Z}_2\times D_4)\rtimes\mathbb{Z}_2^2$ & 64,227& $\mathbb{Z}_2^3\rtimes\mathbb{Z}_4$ & 32,22& 17 & [1;$2^2$]& $(3,-8)^2,(2,-4)^2$&$\mathbb Z_2^2\times\mathbb Z^2$\\
2& $(\mathbb{Z}_2\times D_4)\rtimes\mathbb{Z}_2^2$ &64,227& $\mathbb{Z}_2^3\rtimes\mathbb{Z}_4$& 32,22& 17 & [1;$2^2$]& $(3,-8)^2,(2,-4)^2$&$\mathbb Z_2^2\times\mathbb Z^2$\\
2& $\mathbb{Z}_4\rtimes(D_4\rtimes\mathbb{Z}_2)$ & 64,228& $(\mathbb{Z}_4\rtimes \mathbb{Z}_4)\times\mathbb{Z}_2$ & 32,23& 17 & [1;$2^2$]& $(3,-8)^2,(2,-4)^2$&$\mathbb Z_2^2\times\mathbb Z^2$\\
2& $(\mathbb{Z}_4\times D_4)\rtimes\mathbb{Z}_2$ & 64,234& $(\mathbb{Z}_4\rtimes \mathbb{Z}_4)\times\mathbb{Z}_2$ & 32,23& 17 & [1;$2^2$]& $(3,-8)^3$&$\mathbb Z_2^2\times\mathbb Z^2$\\
2& $(\mathbb{Z}_4\times D_4)\rtimes\mathbb{Z}_2$ & 64,234& $\mathbb{Z}_4^2 \rtimes\mathbb{Z}_2$ & 32,24& 17 & [1;$2^2$]& $(3,-8)^2,(2,-4)^2$&$\mathbb Z_2^2\times\mathbb Z^2$\\
2& $(\mathbb{Z}_4\rtimes Q)\rtimes\mathbb{Z}_2$  & 64,236& $\mathbb{Z}_4^2\rtimes\mathbb{Z}_2$ &32,24& 17 & [1;$2^2$]& $(3,-8)^3$&$\mathbb Z_2^2\times\mathbb Z^2$\\
2& $\mathbb{Z}_4^2\rtimes\mathbb{Z}_2^2$ & 64,219& $\mathbb{Z}_4\times D_4$ &32,25& 17 & [1;$2^2$]& $(3,-8)^3$&$\mathbb Z_2^2\times\mathbb Z^2$\\
2& $(\mathbb{Z}_2^2\rtimes D_4)\rtimes\mathbb{Z}_2$ & 64,221& $\mathbb{Z}_4\times D_4$ & 32,25& 17 & [1;$2^2$]& $(3,-8)^3$&$\mathbb Z_2^2\times\mathbb Z^2$\\
2& $(\mathbb{Z}_2\times\mathbb{Z}_4)\rtimes D_4$ & 64,213& $\mathbb{Z}_4\times D_4$ &32,25& 17 & [1;$2^2$]& $(3,-8)^2,(2,-4)^2$&$\mathbb Z_2^2\times\mathbb Z^2$\\
2& $\mathbb{Z}_4^2\rtimes\mathbb{Z}_2^2$ & 64,206& $\mathbb{Z}_4\times D_4$ &32,25& 17 & [1;$2^2$]& $(3,-8),(2,-4)^4$&$\mathbb Z_2^2\times\mathbb Z^2$
\end{tabular}}
\end{table}

In Table \ref{q0} and \ref{q1}, every row corresponds to a family and 
 we use the following notation: 
columns  $Id(G)$ and $Id(G^0)$  report the MAGMA identifier of the groups $G$ and $G^0$:
 the pair $(a,b)$ denotes the $b^{th}$ group of order $a$ in the database of Small Groups.

The column Type gives the type of the generating vector for  $G^0$  in a short form, e.g. $[0; 2^5]$ stands for
$(0; 2,2,2,2,2)$.
The Branch Locus $\mathcal B$ of $\eta\colon C\times C \to X$ is also given in a short form, e.g. $(3,-8)^2, (2,-4)^2$ means 
that $\mathcal B$ consists of 4 curves, two of genus 3 and  self-intersection $-8$ and  
two of genus 2 and  self-intersection $-4$.

\section{Orbit divisors on  Semi-isogenous Mixed Surfaces}\label{sec:OD}

In this section we explain a method to construct effective divisors on a semi-isogenous mixed surface, or more generally 
on a mixed quotient.
%

Let $C$ be a smooth curve.
For  $f\in \Aut(C)$ we denote by $\Delta_f $ its graph: \[\Delta_f:=\{(x,fx )\in C \times C\mid x \in C\}\,.\]
Let $G$ be a finite subgroup of $\Aut(C)(2)$ whose action  on $C\times C$ is mixed and 
let $H$ be a subgroup of $\Aut(C)$ containing $G^0$: $G^0<H< \Aut(C)$.
The $G$-action on $C\times C$ induces a $G$-action on the set $\{\Delta_f \mid f \in H\}$:

\begin{lemma}[{\cite[Lemma 3.1]{FL19}}]\label{act_delta}
Let $h$ be in $G^0$, then $h(\Delta_f)= \Delta_{\varphi(h)fh^{-1}}$ and $ \tau'h(\Delta_f)= \Delta_{\tau h  f^{-1}\varphi(h^{-1})}$.
\end{lemma}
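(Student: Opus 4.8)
The plan is to verify both identities by a direct computation: substitute a general point of $\Delta_f$ into the explicit formulas for the $G$-action supplied by Theorem \ref{teoazione_sm}, and then reparametrise the image so as to recognise it as the graph of a single automorphism. Throughout I fix $\tau'\in G\setminus G^0$ and keep $\tau=\tau'^2$ and $\varphi(g)=\tau' g\tau'^{-1}$ as in that theorem; recall that $\varphi$ is an automorphism of $G^0$, so in particular $\varphi(h)^{-1}=\varphi(h^{-1})$ for every $h\in G^0$.

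For the first identity I would take $h\in G^0$ and a general point $(x,fx)\in\Delta_f$. The action $g(x,y)=(gx,\varphi(g)y)$ gives $h(x,fx)=(hx,\varphi(h)fx)$. Setting $x'=hx$, i.e.\ $x=h^{-1}x'$, the second coordinate becomes $\varphi(h)fh^{-1}x'$, so as $x$ ranges over $C$ the image is exactly $\{(x',\varphi(h)fh^{-1}x')\mid x'\in C\}=\Delta_{\varphi(h)fh^{-1}}$, as claimed.

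For the second identity I would apply the formula $\tau' g(x,y)=(\varphi(g)y,\tau gx)$ with $g=h$, obtaining $\tau'h(x,fx)=(\varphi(h)fx,\tau hx)$. Now I reparametrise by the \emph{first} coordinate: put $u=\varphi(h)fx$, so that $x=f^{-1}\varphi(h)^{-1}u=f^{-1}\varphi(h^{-1})u$. Substituting into the second coordinate yields $\tau hx=\tau hf^{-1}\varphi(h^{-1})u$, whence $\tau'h(\Delta_f)=\{(u,\tau hf^{-1}\varphi(h^{-1})u)\mid u\in C\}=\Delta_{\tau hf^{-1}\varphi(h^{-1})}$.

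There is no genuine obstacle here; the computation is routine and the only points requiring a little care are inverting the \emph{correct} coordinate when reparametrising (the first factor in the $\tau'h$ case, since the two coordinates get swapped), and using the homomorphism property $\varphi(h)^{-1}=\varphi(h^{-1})$. Finally, to justify the assertion preceding the lemma that $G$ acts on the set $\{\Delta_f\mid f\in H\}$, one checks that both $\varphi(h)fh^{-1}$ and $\tau hf^{-1}\varphi(h^{-1})$ again lie in $H$: this is immediate from $G^0<H$ together with $\varphi(G^0)=G^0$, since every factor appearing belongs to $H$.
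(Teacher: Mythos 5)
Your proof is correct: both identities follow exactly as you compute them from the explicit action formulas of Theorem \ref{teoazione_sm}, and your closing remark that the resulting automorphisms stay in $H$ (using $\varphi(G^0)=G^0$ and $\tau\in G^0<H$) correctly justifies that $G$ acts on $\{\Delta_f\mid f\in H\}$. The paper itself gives no proof, citing \cite[Lemma 3.1]{FL19}, and the argument there is this same routine direct computation, so your approach is essentially identical to the source.
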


For $f\in H$, the sum (taken with the reduced structure) of the divisors in the  orbit
$\{\gamma(\Delta_{f})\}_{\gamma \in G}$   gives the effective divisor 
\[\tilde D:= \left(\sum_{\gamma \in G}\gamma(\Delta_{f})\right)_{red}=\Delta_{f_1}+\ldots+\Delta_{f_n}\,,\]
where  $n$ is the index of the subgroup $\{\gamma \mid \gamma(\Delta_{f})=\Delta_{f}\}\subset G$.

Let $\eta\colon C\times C\to X:=(C\times C )/G$ be the quotient map.
The divisor $\tilde D$  is $G$-invariant, so it yields an effective 
 divisor $D:=\eta_*(\tilde D)_{red}$ on  $X$. 
 Note that $D$ is irreducible, as the image of an irreducible divisor: $D= \eta(\Delta_f)$.
 
Each $G$-orbit yields a distinct effective  irreducible divisor on $X$, so we call  $D$ the  \textit{orbit divisor} induced by  $f$,  e.g. by Proposition \ref{ram_branch} the branch curve $B_{g}$ is the orbit divisor 
induced by $\tau' g\in G^0$.

\begin{lemma}\label{lem:intOD}
Let $X:=(C\times C )/G$ be a  semi-isogenous mixed surface 
and  let $ R:= \sum_{g\in O_2} \Delta_{\tau' g}$ be the ramification divisor of $\eta: C\times C\to X$.

Let $D:=\eta_*(\Delta_{f_1}+\ldots+\Delta_{f_n})_{red}$ and 
$D':=\eta_*(\Delta_{f'_1}+\ldots+\Delta_{f'_m})_{red}$ be two distinct orbit divisors on $X$ induced by $f$ and  $f'$ respectively.
Then 
\[ D.D' = \frac{\alpha}{|G|} \sum_{i=1}^{n}  \sum_{j=1}^{m} \Delta_{f_i} . \Delta_{f'_j}\,, \]
where $\alpha:=0$ if $D$ and $D'$ are both branch curves,  $\alpha:=2$ if only one of them is a branch curve,
 $\alpha:=1$ if neither of them is a branch curve.

If $D$  is not a branch curve of $\eta$, then
\begin{eqnarray*}
D^2 &=& \frac{-2(g(C)-1)n}{|G|}  +\frac{2}{|G|}\sum_{1\leq i<j\leq n} \Delta_{f_i}. \Delta_{f_j}  \,,\\
K_X.D&= &\frac{ 4(g(C)-1)n }{|G|}- \frac{1}{|G|}\sum_{j=1}^n\sum_{g\in O_2}   \Delta_{\tau' g}. \Delta_{f_j}\,.
\end{eqnarray*}
If $D$  is  a branch curve of $\eta$, then
\begin{eqnarray*}
D^2 &=& \frac{-8(g(C)-1)n}{|G|}  +\frac{8}{|G|}\sum_{1\leq i<j\leq n} \Delta_{f_i}. \Delta_{f_j}  \,,\\
K_X.D&= &\frac{ 12(g(C)-1)n }{|G|}\,.
\end{eqnarray*}

\end{lemma}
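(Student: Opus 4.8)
The plan is to pull everything back to $C\times C$, where the intersection theory of graphs is explicit, and to exploit that $\eta$ is a finite morphism of degree $|G|$ between smooth surfaces. The cornerstone is the \emph{degree formula}: for any two divisors $A,B$ on $X$ one has $\eta^*A\cdot\eta^*B=|G|\,(A\cdot B)$. Hence it suffices to express $\eta^*D$, $\eta^*D'$ and $\eta^*K_X$ in terms of the graphs $\Delta_{f_i}$ and the ramification divisor $R$. For an orbit divisor $D=\eta_*(\Delta_{f_1}+\dots+\Delta_{f_n})_{red}$ with reduced preimage $\tilde D=\sum_i\Delta_{f_i}$, I would argue that $\eta^*D=\epsilon_D\,\tilde D$, where $\epsilon_D=1$ if $D$ is not a branch curve (there $\eta$ is \'etale over the generic point of $D$, so the pullback is reduced) and $\epsilon_D=2$ if $D$ is a branch curve (by Proposition~\ref{ram_branch}~i) the stabilizer of $R_g$ is $\mathbb Z_2$, so the ramification index along $\tilde D$ is $2$ and locally $\eta^*\{s=0\}=\{t^2=0\}$).

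Next I would record the two intersection numbers on $C\times C$ that drive all the formulas. Since $\mathrm{id}\times f$ is an automorphism of $C\times C$ carrying the diagonal $\Delta$ to $\Delta_f$, it preserves intersection numbers, so $\Delta_f^2=\Delta^2=2-2g(C)=-2(g(C)-1)$. Writing $K_{C\times C}=\mathrm{pr}_1^*K_C+\mathrm{pr}_2^*K_C$ and using that each projection restricts to an isomorphism $\Delta_f\xrightarrow{\sim}C$ of degree $2g(C)-2$, one gets $K_{C\times C}\cdot\Delta_f=2(2g(C)-2)=4(g(C)-1)$. Finally, the Hurwitz relation for the $G$-cover $\eta$, whose ramification has index $2$ everywhere, reads $\eta^*K_X=K_{C\times C}-R$ with $R=\sum_{g\in O_2}\Delta_{\tau'g}$ exactly the divisor in the statement.

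With these ingredients the three formulas are a direct substitution. For the mixed intersection, $D\cdot D'=\tfrac1{|G|}\eta^*D\cdot\eta^*D'=\tfrac{\epsilon_D\epsilon_{D'}}{|G|}\sum_{i,j}\Delta_{f_i}\cdot\Delta_{f'_j}$, which gives $\alpha=1$ (neither a branch curve) and $\alpha=2$ (exactly one a branch curve); when both are branch curves the graphs $\Delta_{f_i}=R_{g_i}$ and $\Delta_{f'_j}=R_{g'_j}$ are pairwise disjoint by Proposition~\ref{ram_branch}~i), so the whole sum vanishes and we record this as $\alpha=0$. For $D$ not a branch curve, $\eta^*D=\tilde D$ gives $D^2=\tfrac1{|G|}(\sum_i\Delta_{f_i})^2$ and $K_X\cdot D=\tfrac1{|G|}(K_{C\times C}-R)\cdot\tilde D$; inserting $\Delta_{f_i}^2=-2(g(C)-1)$ and $K_{C\times C}\cdot\Delta_{f_i}=4(g(C)-1)$ yields the two displayed equalities, the double sum in $K_X\cdot D$ being precisely $R\cdot\tilde D=\sum_j\sum_{g\in O_2}\Delta_{\tau'g}\cdot\Delta_{f_j}$. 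For $D$ a branch curve, $\eta^*D=2\tilde D$ produces the overall factors $4$ and $2$; here $R\cdot\tilde D=\sum_i R_{g_i}^2=-2(g(C)-1)n$ because distinct ramification curves are disjoint, so $K_X\cdot D=\tfrac{2}{|G|}\bigl(4(g(C)-1)n+2(g(C)-1)n\bigr)=\tfrac{12(g(C)-1)n}{|G|}$.

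I expect the only delicate point to be bookkeeping the ramification factor $\epsilon_D$ correctly: the factor $2$ along branch curves is what turns $\alpha=1$ into $\alpha=2$ and produces the coefficients $-8$ and $12$ in the branch-curve formulas, and conversely the disjointness of the ramification curves (Proposition~\ref{ram_branch}~i)) is what makes the residual cross-terms $\sum_{i<j}\Delta_{f_i}\cdot\Delta_{f_j}$ harmless and justifies the convention $\alpha=0$ for two branch curves. Everything else is the formal degree formula together with the two graph computations on $C\times C$.
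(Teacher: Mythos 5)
Your proposal is correct and follows essentially the same route as the paper's proof: pulling back along the degree-$|G|$ map $\eta$, writing $\eta^*D=\epsilon_D\tilde D$ with $\epsilon_D\in\{1,2\}$ according to whether $D$ is a branch curve, using $\eta^*K_X=K_{C\times C}-R$, and invoking the pairwise disjointness of the ramification curves to evaluate $R\cdot\tilde D$ in the branch-curve case. The only (immaterial) difference is that you obtain $\Delta_f^2=-2(g(C)-1)$ by transporting the diagonal via $\mathrm{id}\times f$, while the paper gets it from adjunction with $K_{C\times C}\equiv 2(g(C)-1)(F_1+F_2)$.
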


\begin{proof} If $D$ and $D'$ are branch curves, then $D.D'=0$ since they are disjoint  by Proposition \ref{ram_branch}. 
Otherwise let $\tilde D:=\Delta_{f_1}+\ldots+\Delta_{f_n}$ and set $m:=2$ if $D$ is a branch curve for $\eta$, and   $m:=1$ if not.
 By definition we have $\eta^* D= m\tilde D$ and the same goes for $D'$.
By  the projection formula the first formula follows and  we obtain 
\[
D^2 =\frac{1}{|G|}(\eta^*D)^2= \frac{1}{|G|} \left(m \cdot \sum_{i=1}^n \Delta_{f_i}\right)^2=
 \frac{m^2}{|G|} \left(\sum_{i=1}^n \Delta_{f_i}^2 +2\sum_{1\leq i<j\leq n} \Delta_{f_i}. \Delta_{f_j} \right)\,.\]

The canonical divisor $K_{C\times C}$ of $C\times C$ is numerically equivalent to 
$2(g(C)-1)(F_1+F_2)$, where $F_1,F_2$ are general fibers of the projection of $C\times C$ respectively onto  the first and  the second coordinate, whence the values $\Delta_{f_i}^2$ are easily computed using the adjunction formula $K_{C\times C}. \Delta_{f_i}+ \Delta_{f_i}^2 =2(g(C)-1)$:
\[K_{C\times C}. \Delta_{f_i}= 2(g(C)-1)(F_1+F_2). \Delta_{f_i}= 4(g(C)-1)\,,\quad 
\text{and so} \quad \Delta_{f_i}^2 =  -2(g(C)-1)\,. \]
Both formulas for $D^2$ follow.

\

Since $\eta$ is branched along  some curves, each with ramification index 2, the ramification formula  implies $\eta^* K_X =K_{C\times C}-R$ and the projection formula  yields
\[
  K_X.D=\frac{1}{|G|} \eta^*K_X. \eta^* D= \frac{m}{|G|} K_{C\times C}.\tilde D -\frac{m}{|G|} R.\tilde D\,.
\]

The first summand  can be easily computed as above: 
$K_{C\times C}.\tilde D = 4(g(C)-1)n$, so we are left with  $R.\tilde D$.
We distinguish 2 cases.

If $D$  is not a branch curve, then none of the $\Delta_{f_i}$ is a component of $R$, whence
\[
R.\tilde D = \sum_{i=1}^n R. \Delta_{f_i}= \sum_{i=1}^n \sum_{g\in O_2} \Delta_{\tau' g}.  \Delta_{f_i}\,.
\]

 If $D$ is a branch curve, then the curves $\Delta_{f_i}$ are irreducible components of $R$,
  and so $\tilde R:= R-\tilde D$  is an effective divisor.
  By  Lemma \ref{ram_branch} the ramification curves are pairwise disjoint,  whence
 \[R. \tilde D= ( \tilde D + \tilde R). \tilde D=  \tilde D^2= \sum_{i=1}^n \Delta_{f_i}^2=
 -2(g(C)-1)n \,.\]
\end{proof}

\begin{remark}
By {\cite[Lemma 3.3]{FL19}}, if  $f_1,  f_2$ are different elements of $ \Aut(C)$, 
then their graphs intersect transversally, whence to determine $\Delta_{f_1}.\Delta_{f_2}= |\Delta_{f_1}\cap\Delta_{f_2}|=|\{x\in C \mid f_1(x)=f_2(x)\}|$, it is enough
to count the number of points on $C$ fixed by  the non-trivial automorphism $f:=f_1^{-1} f_2 \in H$.

This can be done  by applying the  Riemann Existence Theorem (Theorem \ref{RET})  to the covering   $C\to C/H$. 
According to Remark \ref{rem:Stab}  (we use here the same notation)
 the  automorphism  $f\in H$ fixes the point $gK_j (=:y)$ if and only if $ f\in  gK_jg^{-1} (= \Stab(y))$. Since every coset has $|K_j|=m_j$ representatives, we obtain
\[|\Fix( f)|=\sum_{j=1}^r \frac{1}{m_j}\sum_{g\in H} \textbf 1_{gK_jg^{-1}}( f)\,,\]
where
$\textbf 1_{gK_jg^{-1}}$ denotes  the indicator function of the set $gK_jg^{-1}$.

\end{remark}

 We conclude this section  showing that,  under certain assumption,  the unique way to obtain  rational curves
 (and so a $(-1)$-curve) on a mixed quotient $X$  with irregularity $q=1,\ 2$  is by considering orbit divisors 
 (cf. \cite[Proposition 1.8, Theorem 4.3]{Pigna20} for the case $q\geq 3$).

\begin{proposition}\label{prop:structure}
Let $S\to X:=(C\times C)/G$ be a mixed surface with irregularity $q:=q(S)$ and let $E$ be a rational curve on the  mixed quotient $X$.
Assume that one of the following holds:
\begin{itemize}[leftmargin=*]
\item  $q=2$ and   the hyperelliptic  involution $\sigma$ of the genus 2 curve $C/G^0$ lifts to an automorphism of $C$;
 
\item $q=1$ and  the involution $\sigma: x \mapsto p-x$ of the elliptic curve $C/G^0$ lifts to an automorphism of $C$.
\end{itemize}
Then $E$ is the orbit divisor induced by  a lift $\hat \sigma \in \Aut(C)$ of $\sigma$.

\end{proposition}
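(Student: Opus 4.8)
The plan is to study the rational curve $E$ via a natural fibration of $X$ over the symmetric square of $C':=C/G^0$, and then lift the whole picture back to $C\times C$, where rational curves inside orbit divisors are controlled by graphs of automorphisms.

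\textbf{Step 1: a morphism to the symmetric square.} Write $c\colon C\to C'$ for the quotient map and consider the composite $C\times C \xrightarrow{c\times c} C'\times C'\to C'^{(2)}$. Using the explicit mixed action \eqref{azione_sm} I would check that $G^0$ acts trivially on $C'\times C'$ (since $g$ and $\varphi(g)$ lie in $G^0$) while $\tau'(x,y)=(y,\tau x)$ induces the swap of the two factors; hence the composite is $G$-invariant and descends to a morphism $\pi\colon X\to C'^{(2)}$ (a degree count gives $\deg\pi=|G^0|$). Let $\tilde E_0$ be an irreducible component of $\eta^{-1}(E)$. I first observe that $\pi(E)$ is a curve, not a point: otherwise $(c\times c)(\tilde E_0)$ would lie in a fibre of $C'\times C'\to C'^{(2)}$, which is finite, forcing $\tilde E_0\subseteq c^{-1}(a)\times c^{-1}(b)$ for some $a,b\in C'$, a finite set, contradicting $\dim\tilde E_0=1$.

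\textbf{Step 2: identifying $\pi(E)$.} Since $E$ is rational and $\pi(E)$ is a curve, $\pi(E)$ is a rational curve in $C'^{(2)}$, and I would pin it down using the canonical map of the symmetric square to an abelian variety. For $q=2$ the curve $C'$ has genus $2$ and the Abel--Jacobi map $C'^{(2)}\to\mathrm{Jac}(C')$ is the blow-up at $[K_{C'}]$, whose only contracted curve is the exceptional $(-1)$-curve $\Gamma=\{\{u,\sigma u\}\mid u\in C'\}$ with $\sigma$ the hyperelliptic involution. For $q=1$ the sum map $\mu\colon C'^{(2)}\to C'$ realises $C'^{(2)}$ as a ruled surface over the elliptic curve $C'$, whose fibres are the curves $\Gamma_{p_0}=\{\{u,p_0-u\}\}=\{\{u,\sigma u\}\}$ for $\sigma\colon x\mapsto p_0-x$. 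As an abelian variety (resp.\ an elliptic curve) carries no rational curve, the Albanese map of $C'^{(2)}$ must contract the rational curve $\pi(E)$, whence $\pi(E)=\Gamma$ (resp.\ $\pi(E)=\Gamma_{p_0}$). In either case $\pi(E)$ is the image of the graph $\Delta^{C'}_{\sigma}:=\{(u,\sigma u)\mid u\in C'\}$, and because $\sigma$ is an involution this graph is exactly the full preimage of $\pi(E)$ under $C'\times C'\to C'^{(2)}$.

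\textbf{Step 3: lifting and conclusion.} Now I invoke the hypothesis for the relevant involution (in the case $q=1$, for $\sigma\colon x\mapsto p_0-x$ with $p=p_0$ the point determined by $E$), obtaining a lift $\hat\sigma\in\Aut(C)$ with $c\circ\hat\sigma=\sigma\circ c$. From Step 2, $(c\times c)(\tilde E_0)=\Delta^{C'}_\sigma$, so $\tilde E_0\subseteq(c\times c)^{-1}(\Delta^{C'}_\sigma)$, and a direct computation yields
\[(c\times c)^{-1}(\Delta^{C'}_\sigma)=\{(x,y)\mid c(y)=c(\hat\sigma x)\}=\bigcup_{h\in G^0}\Delta_{h\hat\sigma},\]
a finite union of distinct graphs of the automorphisms $h\hat\sigma$, each a lift of $\sigma$. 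As $\tilde E_0$ is irreducible of dimension $1$, it must equal one of these, say $\tilde E_0=\Delta_{\hat\sigma'}$ with $\hat\sigma':=h_0\hat\sigma$ a lift of $\sigma$; therefore $E=\eta(\tilde E_0)=\eta(\Delta_{\hat\sigma'})_{red}$ is exactly the orbit divisor induced by $\hat\sigma'$. The main obstacle is Step 2: one must establish precisely which rational curve of $C'^{(2)}$ can occur as $\pi(E)$, which rests on the fine geometry of the symmetric square (blow-up of $\mathrm{Jac}(C')$ when $q=2$, ruled surface when $q=1$) together with the absence of rational curves on the abelian/elliptic target; a secondary point to handle with care in the case $q=1$ is that $\pi(E)$ a priori sits over an arbitrary $p_0\in C'$, so the hypothesis must be applied to the involution $x\mapsto p_0-x$ singled out by $E$.
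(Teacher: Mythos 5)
Your proposal is correct and follows essentially the same route as the paper: both pass to the finite morphism $\pi\colon X\to \Sym^2(C')$, use the Abel--Jacobi map and the absence of rational curves on abelian varieties to identify $\pi(E)$ with the image of the graph of the involution $\sigma$, and then use the hypothesized lift $\hat\sigma$ to conclude that each component of $\eta^{-1}(E)$ is a graph $\Delta_{h\hat\sigma}$, $h\in G^0$. The only difference is that you verify directly (via the explicit mixed action and the preimage computation $(c\times c)^{-1}(\Delta^{C'}_\sigma)=\bigcup_{h\in G^0}\Delta_{h\hat\sigma}$) what the paper obtains by citing Pignatelli's construction of $\pi$ and by asserting that the components of $\eta^{-1}(E)$ dominate $\Gamma$.
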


\begin{proof}
Let $C'$ be the genus $q$ curve $C/G^0$ and let $\Sym^2(C')$ be its  symmetric product.

By construction (cf. \cite[Remark 1.6]{Pigna20}), there is a finite dominating morphism $\pi:X\to \Sym^2(C')$, which fits into the commutative diagram (all morphism are finite):
\[\xymatrix{
C\times C \ar[rr]^Q \ar[d]_\eta &&   C' \times C'\ar[d]_s\\
X \ar[rr]^\pi &&\Sym^2(C')
}
\]
where the map $Q$ is the product map $Q:=f\times f$, being $f\colon C\to C'$  the quotient map.

\

If $g(C')=2$ we borrow an argument from \cite[Proposition 1.5]{PP16}.
In this case   the Abel-Jacobi map 
$a\colon \Sym^2(C')\to J(C')$ is the contraction of the rational curve $E'$
given by the divisors in the unique $g^1_2$, and $\Sym^2(C')$ does not contain
other rational curves, since $J(C')$ is a torus.
This implies  $ \pi(E)=E'$.

On the other hand $E'$ is  the image of $\Gamma:=\{(x,\iota(x))\mid x \in C'\}$, the graph of the hyperelliptic involution $\sigma$ of $C'$, via the quotient map $s \colon C'\times C' \to \Sym^2(C')$.

Since every irreducible component of $\eta^{-1}(E)$ dominates $\Gamma$ and the involution $\sigma $ lifts to 
an automorphism $\sigma \in \Aut(C)$, we have that every irreducible component of $\eta^{-1}(E)$
has the form $\{(u, g \hat\sigma(u) )\mid u \in C\}= \Delta_{g\hat \sigma}$ for $g \in G^0$ and the first claim follows.

\

If $g(C')=1$, then the rational curves on $\Sym^2(C')$ are precisely the fibers of   the Abel-Jacobi map 
$a\colon \Sym^2(C')\to J(C'), \{v,w\}\mapsto  v+w$, 
whence $a(\pi( E))=\{p\}$, for $p\in C'$.
It follows that $\pi(E)$ is the image of $\Gamma:=\{(x,p-x)\mid x \in C'\}$, the graph of the involution $\sigma\colon x\mapsto p-x$ of $C'$, via the quotient map $s\colon C'\times C' \to \Sym^2(C')$.
Arguing as above we get the second claim.
\end{proof}

We pose  the question whether a similar description for the rational curves holds,  when the involution does not lift
to an automorphism of $C$.

\section{Lifting Automorphisms}\label{sec:lifting}

We give now sufficient conditions for  an involution on $C/G^0$ to  lifting  to an automorphism of $C$.

\begin{lemma}\label{lemma:lifting} 
Let $C$ be a smooth projective curve, and let $H^0<\Aut(C)$ be a finite group
acting faithfully on $C$, such that a generating vector $V:=(a_1,b_1; c_1,c_2)$ for $H^0$ associated to  the Galois covering
$f_1\colon C\to C/H^0=:E$ is of type $[1; m,m]$, i.e.~$E$ is an elliptic curve and $f_1$ is branched in 2 points
$p_1, p_2$ with index $m$. Then: 
\begin{itemize}
\item[i)] the involution $\iota\colon E\to E, x\mapsto (p_1+p_2)-x$ lifts to an automorphism $\hat \iota$ of $C$ compatible 
with $f_1$, i.e.~ $f_1\circ\hat \iota= \iota \circ f_1$.
\item[ii)] the group $H<\Aut(C)$ generated by $H^0$ and $\hat \iota $ is a degree 2 extension of $H^0$: $H\cong H^0\rtimes \mathbb Z_2$.
\item[iii)] The Galois covering $f\colon C\to C/H\cong \mathbb P^1$ is branched in 5 points:
the image (via the quotient map $f_2\colon E\to E/\langle \iota\rangle \cong \mathbb P^1$) of $p_1$ with multiplicity $m$  and of the four halves of $(p_1+p_2)$ with multiplicity $2$; i.e~the covering has a  generating vector of type $[0;2^4,m]$.
Moreover, such a generating vector has the form  $(-;h_1,h_2,h_3, h_4, c_1)$ where $h_j\in H\setminus H^0$.
\end{itemize}
\end{lemma}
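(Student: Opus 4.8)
The plan is to reduce everything to the monodromy of the cover $f_1$ and then read off parts (ii) and (iii) as consequences of the lift produced in (i).

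First, for (i) I would recenter the elliptic curve $E$ at one of the four fixed points of $\iota$, so that $\iota$ becomes the inversion $x\mapsto -x$ and the two branch points take the symmetric form $\{p,-p\}$ (in particular $\iota$ interchanges them, and they carry the same branching index $m$). I identify $f_1$ with its monodromy $\rho\colon \Gamma\to H^0$, where $\Gamma=\pi_1^{orb}$ of $E$ with two orbifold points of order $m$, and $V=(a_1,b_1;c_1,c_2)$ records $\rho$ on the standard generators. The lift $\hat\iota$ with $f_1\circ\hat\iota=\iota\circ f_1$ exists if and only if the pulled-back cover $\iota^*C$ is $H^0$-isomorphic to $C$, i.e.\ if and only if $\rho\circ\iota_*=\psi\circ\rho$ for some $\psi\in\Aut(H^0)$; equivalently, viewing $\Gamma$ as the index-two subgroup of $\tilde\Gamma:=\pi_1^{orb}(\mathbb P^1;2,2,2,2,m)$ cut out by the double cover $E\to E/\langle\iota\rangle$, the lift exists precisely when $K:=\ker\rho$ is normal in $\tilde\Gamma$. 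I would then compute $\iota_*$ explicitly: it is conjugation by the order-two generator of $\tilde\Gamma$ lying over a fixed point, it inverts the handle generators ($a_1\mapsto a_1^{-1}$, $b_1\mapsto b_1^{-1}$) and interchanges the two order-$m$ puncture generators, and I would use this to write down the candidate $\psi$ and check that it respects the single defining relation $[a_1,b_1]c_1c_2=1$.

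The hard part is exactly this last verification, namely showing that $K$ is normal in $\tilde\Gamma$ (equivalently that $\psi$ is a well-defined automorphism of $H^0$): this is not a formal consequence of $\iota$ preserving the branch divisor, and it is here that the hypothesis that the type is $[1;m,m]$ enters decisively. The two branch points being swapped by $\iota$ and carrying the \emph{same} index $m$ is what makes the interchange $\langle c_1\rangle\leftrightarrow\langle c_2\rangle$ compatible and lets $\psi$ match local monodromies, while the genus-one handle relation then forces the behaviour on $a_1,b_1$. I expect this compatibility to be the crux of the argument. Once $\hat\iota$ is produced, (ii) is quick: since $f_1\circ\hat\iota=\iota\circ f_1$, the automorphism $\hat\iota$ descends to $\iota\neq\mathrm{id}_E$ on $E=C/H^0$, so $\hat\iota\notin H^0$ and $\hat\iota H^0\hat\iota^{-1}=H^0$, whence $H=\langle H^0,\hat\iota\rangle$ contains $H^0$ as a normal subgroup of index two. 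To get the splitting I would arrange $\hat\iota$ to fix a point $x_0$ of $C$ lying over one of the four fixed points $r$ of $\iota$ (the halves of $p_1+p_2$); since $r\notin\{p_1,p_2\}$, the covering $f_1$ is étale at $x_0$ and $\Stab_{H^0}(x_0)=\{1\}$, so $\hat\iota^2\in H^0$ fixes $x_0$ and must therefore be trivial, giving $H\cong H^0\rtimes\mathbb Z_2$.

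Finally, for (iii) I would analyse the composite Galois covering $f=f_2\circ f_1\colon C\to E\to E/\langle\iota\rangle=C/H$, where $f_2\colon E\to\mathbb P^1$ is the quotient by $\iota$, branched over the images of its four fixed points (so $C/H\cong\mathbb P^1$ by Riemann--Hurwitz). The branch points of $f$ are then read off by combining the ramification of the two steps: over $q_0:=f_2(p_1)=f_2(p_2)$ only $f_1$ ramifies, with index $m$, contributing the entry $c_1$ (of order $m$, lying in $H^0$); over each image $q_j$ of a fixed point of $\iota$ only $f_2$ ramifies, with index $2$, while $f_1$ is étale there, contributing an entry $h_j$ of order $2$ with $h_j\in H\setminus H^0$ (by Remark \ref{rem:Stab} these generate the point stabilizers). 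This yields a generating vector $(-;h_1,h_2,h_3,h_4,c_1)$ of type $[0;2^4,m]$, and I would conclude by checking that the Hurwitz count $2g(C)-2=|H|\big(-2+4\cdot\tfrac12+\tfrac{m-1}{m}\big)=2|H^0|\tfrac{m-1}{m}$ agrees with the one coming from $f_1$, confirming consistency.
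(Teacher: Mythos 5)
Your parts (ii) and (iii) are correct and run parallel to the paper's treatment: the paper first proves (by a monodromy computation) that the composite $f=f_2\circ f_1\colon C\to\mathbb P^1$ is a Galois covering of degree $2|H^0|$ and then reads off (i)--(iii), while you derive (ii)--(iii) from (i); your fixed-point argument for the splitting in (ii) is a nice variant of the paper's use of the order-two entries $h_j$. The genuine gap is in (i), which is the entire content of the lemma. Your reduction --- the lift exists iff $K=\ker\rho$ is normal in $\tilde\Gamma=\pi_1^{orb}(\mathbb P^1;2,2,2,2,m)$, iff there is $\psi\in\Aut(H^0)$ with $\psi\circ\rho=\rho\circ\iota_*$ --- is correct, but it is only a restatement of (i), and at exactly this point you stop and write that you ``expect'' the compatibility to hold. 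The verification you propose, checking that the candidate $\psi$ respects the single relation $[a_1,b_1]c_1c_2=1$, cannot close the gap even in principle: $\iota_*$ is conjugation by an element of $\tilde\Gamma$, hence automatically an automorphism of $\Gamma$ and compatible with all relations of $\Gamma$; what must be shown is that it preserves the kernel of $\rho\colon \Gamma\to H^0$, i.e.\ that it is compatible with \emph{every} relation of the finite group $H^0$, about which the single orbifold relation says nothing.

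Moreover, this deferred compatibility is not a formality that the type $[1;m,m]$ forces: it can fail. Take $H^0=\mathbb Z_7\rtimes\mathbb Z_3=\langle x,y\mid x^7=y^3=1,\ yxy^{-1}=x^2\rangle$ and the generating vector $V=(y^{-1}x,\,1;\,y^{-1},\,y)$ of type $[1;3,3]$. Present $\Gamma\trianglelefteq_2\tilde\Gamma$ by the three generators $A=t_1t_4$, $B=t_2t_4$, $C=t_3t_4$, each of which is inverted by $\iota_*=$ conjugation by $t_4$, with $c_1=C^{-1}BA^{-1}$ and $c_2=CB^{-1}A$; this $V$ corresponds to $\rho(A)=\rho(C)=x$, $\rho(B)=xy^{-1}x$, so a lift exists iff some $\psi\in\Aut(H^0)$ satisfies $\psi(x)=x^{-1}$ and $\psi(y)=y^{-1}$. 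No such $\psi$ exists: applying $\psi$ to the relation $yxy^{-1}=x^2$ shows that conjugation by $\psi(y)$ acts on the normal subgroup $\langle x\rangle$ as multiplication by $2$, which forces $\psi(y)\in\langle x\rangle y$, whereas $y^{-1}\in\langle x\rangle y^{2}$. So the step you ``expect'' is precisely where all the difficulty sits, and it cannot be supplied by the formal symmetry of the data alone; any complete argument must establish it by other means. This is also where the paper's own proof concentrates: in the proof of its Claim, the decisive point is the identification of the monodromy image of the loops that lift to loops on $E$ with a single copy of $H^0$ (the step ``$L\cong H^0$''), which is what makes $f$ Galois and is exactly equivalent to (i); when you compare your write-up with the paper, you should examine how that identification is justified, since as the example above shows it does not follow from the hypotheses by a purely formal argument.
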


\begin{proof}
The involution $\iota$ has four  fixed points: $\frac{p_1+p_2}2 +E[2]$ the four halves of $p_1+p_2 \in E$, hence 
the Galois covering $f_2\colon E\to  E/\langle\iota \rangle\cong \mathbb P^1$ is branched in four points $q_1,q_2,q_3,q_4$, each with ramification index $2$.
These four points together with $p:=f_2(p_1)=f_2(p_2)$ form the branch locus $B\subset \mathbb P^1$ of the  covering
$f:=f_2\circ f_1 \colon C\to \mathbb P^1$. 

\	
	
\noindent \textbf{Claim.} The map $f\colon C\to \mathbb P^1$ is a Galois covering, with Galois group $H$:
a degree 2 extension of $H^0$.

\

Taking the Claim for granted, the statement follows immediately.

 iii) We have $\deg(f)=2\cdot |H^0|= |H|$,  and  each $q_i$ has $|H^0|$ preimages, i.e.~ramification index $\frac{|H|}{|H^0|}=2$,
 while  $p$ has $2\cdot \frac{|H^0|}m$ preimages, i.e.~ramification index $m$.
 
The stabilizers of the points in  $f^{-1}(p)$ are  conjugate to each other in $H$, 
and by assumption one of them is generated by $c_1$.
 On the other hand, a fixed    point in $f^{-1}(q_i)$  has  stabilizer of order 2 generated  by   an element in $ H\setminus H^0$,   since $H^0$  fixes no points outside $f_1^{-1}(p_1)$ and  $f_1^{-1}(p_2)$.
Therefore, a generating vector associated to the covering $f$ has the form  $(-;h_1,h_2,h_3, h_4, c_1)$, $ h_i \in H\setminus H^0$.
 
i)-ii) Being a degree 2 extension of $H^0$, the group $H$ is generated by the subgroup $H^0$ and a single element
of $H\setminus H^0$, which is then  a lift of $\iota$ compatible with $f_1$. Moreover there exists a short exact sequence
\[1\longrightarrow H^0 \longrightarrow H\longrightarrow \mathbb Z_2 \longrightarrow 1 \]
which splits, because $h_i\in H\setminus H^0$ has order 2, whence $H\cong H^0\rtimes \mathbb Z_2$.
\end{proof}

\noindent\textit{Proof of the claim.} Let $x'\in \mathbb P^1\setminus B$ and  consider the monodromy map 
\[ \mu:\pi_1(\mathbb P^1\setminus B,x') \rightarrow \mathfrak S_F  \]
of the covering $f:C \to \mathbb P^1$, where $F:=f^{-1}(x')$.
Recall that for  a loop $\gamma$ in $\mathbb P^1\setminus B$ with base point $x'$, 
the bijection $\mu(\gamma)\in \mathfrak S_F$ of the fiber $F$ is defined as follows:
given  a point $y\in F$, we map it to the end point $ \hat \gamma_y(1)$ of the unique lift $\hat \gamma_y$ of $\gamma$ with  starting point $y$.

This map depends on the base point, and to change it, amounts to a relabelling of the element of $F$,
hence the image of $\mu$ as abstract group is well defined (see \cite[Section III.4]{Mir}).
The covering $f$ is then Galois
if and only if  $|\im(\mu)|=|F|=2|H^0|$, in this case $\im(\mu)$ is the Galois group of the covering.

Let $x_1$, $x_2$ be the 2 preimages of the base point $x'$ via the map $f_2$,
and fix a path $\tilde \beta:[0,1]\to E\setminus f_2^{-1}(B)$ with starting point $\tilde\beta(0)=x_1$ 
and end point $\tilde\beta(1)=x_2$.

This path descends to a loop $\beta:={f_2}_*(\tilde \beta)$ in $\mathbb P^1\setminus B$,
 and we set $b:=\mu(\beta)\in \mathfrak S_F$. 
  This permutation $b$ defines  a bijection between the following subsets of $F$: $A_1:=f_1^{-1}(x_1)$ and $A_2:=f_1^{-1}(x_2)$,
 and  it induces an isomorphism of the images of the monodromy maps
\begin{eqnarray*}
 \nu_1: \pi_1(E\setminus\{p_1,p_2\},x_1 ) &\to & H^0 \leq \mathfrak{S}_{A_1}\,,\\
\nu_2: \pi_1(E\setminus\{p_1,p_2\},x_2 ) &\to & H^0 \leq \mathfrak{S}_{A_2}\,.
 \end{eqnarray*}

There are 2 types of loops in $\pi_1(\mathbb P^1\setminus B,x')$:
either they lift to loops on $E$ with starting  point $x_i$ ($i=1,2$), 
or they lift to paths joining $x_1$ and $x_2$.

Let $\gamma \in \pi_1(\mathbb P^1\setminus B,x')$ be a loop of the first type and let $\gamma_i$ be its lift 
on $E$ with base point $x_i$.
By definition of the monodromy maps we have that 
\[\mu(\gamma)=(\nu_1(\gamma_1), \nu_2(\gamma_2))= (g_\gamma,g'_\gamma) \in H^0 \times H^0 \leq  \mathfrak S_{A_1}\times  \mathfrak S_{A_2}\leq \mathfrak S_{F}\]
hence we can identify the pair $(g_\gamma,g'_\gamma) $ with its first entry  and 
the loops of the first type generate a subgroup $L\cong H^0$ inside 
$\mathfrak S_{A_1}\times  \mathfrak S_{A_2}\leq  \mathfrak S_F$.

Note that $b$ swaps $A_1$ and $A_2$, so  $b=\mu(\beta)\notin L$.
On the other hand the unique lift of  $\beta^2$ with starting point 
$x_1$ (or $x_2$) is a loop, whence $b^2=\mu(\beta^2)\in L$.

Let now $\delta \in \pi_1(\mathbb P^1\setminus B,x')$ be a loop, whose unique lift on $E\setminus f_1^{-1}(B)$ with starting 
  point $x_i$ is  not a loop.
Then  $ \beta^{-1}*\delta$ lifts to loops with base points $x_i$ and so  \[\mu(\delta)=\mu(\beta *\beta^{-1}*\delta)=\mu(\beta)\cdot \mu(\beta^{-1}*\delta)=
b\cdot(g_{\beta^{-1}*\delta}, g'_{\beta^{-1}*\delta})\in b\cdot L\,.\]

Finally, we have $b \cdot (g_\gamma,g'_\gamma)\cdot b^{-1}=
\mu(\beta*\gamma*\beta^{-1})= \mu (\gamma')= (g_{\gamma'},g'_{\gamma'})$,
whence $L\trianglelefteq_2 \langle b, L\rangle \leq \mathfrak S_F$.

Since  the image  $\im(\mu)=:H$  of the monodromy map  has order $2|H^0|=|f^{-1}(x')|$, and $H^0\trianglelefteq_2 \im(\mu)$, the claim is proved.\qed

\begin{remark}\label{rem:lift}

An analogous argument   also works  in the following cases:
\begin{itemize}[leftmargin=*]

\item the generating vector of $H^0$ is of type $[2;-]$. In this case we lift the hyperelliptic involution,
getting a group $H$ of order $2|H^0|$ and a covering $C\to  C/H\cong \mathbb P^1$ branched in 6 points (images of the 6 Weierstra\ss-points via the hyperelliptic map) with multiplicity 2,
i.e.~ having  generating vector of type $[0;2^6]$.

\item the generating vector of $H^0$ is of type $[0; m,m,m]$. 
In this case we lift the symmetric group $S_3$: the group of symmetries of $\mathbb P^1$ with marked points 
$1$, $\xi:=\exp\left(\frac{2\pi i}3\right)$ and $\xi^2$ (cf. \cite[Section 2.1]{FL19}). In this way we get a  group $H$ of order $6|H^0|$ and a covering $C\to  C/H\cong \mathbb P^1$  with generating vector of type
$[0;2,3,2m]$.

\end{itemize}

\end{remark}

\section{Proof of the main theorem}\label{sec:proof_main}

In this section we  prove  the following theorem.

\begin{theorem}\label{mainThm}
Let $X:=(C\times C)/G$ be a semi-isogenous mixed surfaces with $p_g(X)=q(X)$ and $|G^0| \leq 2000, \neq 1024$.
\begin{itemize}
\item If $K^2_X=2$, then there are two $(-1)$-curves on $X$, and $K_{X_{min}}^2=4$.
\item If $K^2_X=4$, then there is one  $(-1)$-curve on $X$, and $K_{X_{min}}^2=5$.
\end{itemize}
\end{theorem}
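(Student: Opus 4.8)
Throughout, recall that an irreducible curve $E\subset X$ is a $(-1)$-curve if and only if $E^2=K_X.E=-1$, since these two conditions force $p_a(E)=0$ and hence $E\cong\mathbb P^1$. The plan is to combine the upper bound of Proposition \ref{bound_curves} with an explicit lower bound obtained by producing the maximal allowed number of pairwise disjoint $(-1)$-curves in each family. Indeed, if for $K^2_X=2$ (resp. $K^2_X=4$) one exhibits two disjoint (resp. one) $(-1)$-curves and contracts them, one obtains a smooth surface $X'$ with $K^2_{X'}=K^2_X+2=4$ (resp. $K^2_X+1=5$). As $X'$ still dominates $X_{min}$ one has $K^2_{X'}\le K^2_{X_{min}}$, while Proposition \ref{bound_curves} gives $K^2_{X_{min}}\le K^2_X+2=4$ (resp. $\le 5$); hence $K^2_{X'}=K^2_{X_{min}}$, so $X'=X_{min}$ and the claimed value of $K^2_{X_{min}}$ follows. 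The $p_g=q=2$ families having been settled in \cite{CF18}, it remains to treat the families of Tables \ref{q0} and \ref{q1}.

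For each such family I would first fix an intermediate group $G^0<H<\Aut(C)$ supplying the candidate orbit divisors. In the families with $p_g=q=1$ the base $C/G^0$ is an elliptic curve with generating vector of type $[1;2,2]$, so I would apply Lemma \ref{lemma:lifting} with $m=2$ to lift the involution $\iota\colon x\mapsto(p_1+p_2)-x$ to $\hat\iota\in\Aut(C)$ and set $H:=\langle G^0,\hat\iota\rangle\cong G^0\rtimes\mathbb Z_2$. By Proposition \ref{prop:structure} every rational curve on $X$, and in particular every $(-1)$-curve, is then the orbit divisor induced by some lift $g\hat\iota$, $g\in G^0$, of $\iota$; thus scanning these finitely many elements is an exhaustive search. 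In the two families with $p_g=q=0$ the base is $\mathbb P^1$, of type $[0;2^5]$, and Proposition \ref{prop:structure} does not apply; here I would simply take $H=G^0$ and search among the orbit divisors induced by elements of $G^0$.

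The heart of the argument is then a finite computation per family, governed by Lemma \ref{lem:intOD}. For a candidate $f\in H$ I would form $D=\eta(\Delta_f)$, read off the orbit length $n$ from the $G$-action on the graphs described in Lemma \ref{act_delta}, and evaluate every needed intersection $\Delta_{f_i}.\Delta_{f_j}=|\Fix(f_i^{-1}f_j)|$ by the fixed-point count attached to the generating vector (the formula displayed after Lemma \ref{lem:intOD}). Inserting the values of $g(C)$ and $|G|$ recorded in Tables \ref{q0} and \ref{q1} into the formulas of Lemma \ref{lem:intOD} yields $D^2$ and $K_X.D$ in closed form, and I would retain those $f$ for which $D^2=K_X.D=-1$. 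In the $K^2_X=2$ families I would furthermore check, using the intersection formula of Lemma \ref{lem:intOD} with $\alpha=1$, that two of the resulting curves are disjoint, so that they may be contracted simultaneously as required above.

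The main obstacle is organizational rather than conceptual: the analysis must be repeated for each of the roughly twenty families, and the fixed-point numbers $|\Fix(f_i^{-1}f_j)|$ depend sensitively on the conjugacy structure of $H$ relative to the chosen generating vector. I would therefore delegate the enumeration of $H$, the formation of the orbit divisors, and the evaluation of their intersection numbers to the accompanying MAGMA script, which isolates the $(-1)$-curves in each case. Once the prescribed $(-1)$-curves have been exhibited in every family, the reduction of the first paragraph completes the proof.
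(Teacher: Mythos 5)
Your proposal is correct and takes essentially the same route as the paper: the same choices of $H$ (namely $H=G^0$ for the $p_g=q=0$ families and the degree-2 extension of Lemma \ref{lemma:lifting} for the $p_g=q=1$ families), the same MAGMA-assisted orbit-divisor intersection computations via Lemmas \ref{act_delta} and \ref{lem:intOD}, and Proposition \ref{bound_curves} to cap the number of $(-1)$-curves and identify the contracted surface with $X_{min}$. The only (harmless) over-reach is your exhaustiveness claim via Proposition \ref{prop:structure}: for $q=1$ the relevant involution $x\mapsto p-x$ depends on the rational curve under consideration, while Lemma \ref{lemma:lifting} only lifts the one with $p=p_1+p_2$, so the search need not be exhaustive --- but nothing in your argument actually requires this, since the upper bound on the number of $(-1)$-curves is supplied by Proposition \ref{bound_curves}.
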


\begin{proof}

We prove the statement case by case, considering a suitable automorphism group
$H$: $G^0<H<\Aut(C)$, and constructing the induced orbit divisors.
In each case we find the maximal possible number of $(-1)$-curves predicted by Proposition \ref{bound_curves}.

According to Tables \ref{q0}  and \ref{q1}, we have to consider 3 cases.

\

\textbf{Case 1, $p_g(X)=q(X)=0$.} 
Looking at Table \ref{q0}, we have to consider 2 families, 
both with $K^2_X=2$, and  determined by a group $G$ of order 256,  a
subgroup $G^0<G$ of order 128 and a generating vector of type $[0;2^5]$ for $G^0$.
As automorphism group $H$ we consider the group $G^0$.

The MAGMA script in Appendix shows that in both cases the 128 curves $\Delta_f$, $f\in G^0$, induce 18 orbit divisors on 
$X$, and among them there are two $(-1)$-curves. 

 By  Proposition \ref{bound_curves}, there are no further $(-1)$-curves, whence contracting them we get the minimal model $X_{min}$ of $X$: $K_{X_{min}}^2=4$.

\

\textbf{Case 2, $p_g(X)=q(X)=1$, $K^2_X=4$.} 
Looking at Table \ref{q0}, we have to consider 2 families, both  determined by a group $G$ of order 48,  a subgroup $G^0<G$ of order 24 and a generating vector of type $[1;2,2]$ for $G^0$, i.e.  the covering $C\to C/G^0$ is branched in 2 points and $C/G^0$ has genus 1.

By Lemma \ref{lemma:lifting}, there exists a degree 2 extension $H$ of $G^0$ acting as a group of automorphism of $C$.

The supporting MAGMA script available at
\url{http://www.staff.uni-bayreuth.de/~bt301744/}  shows that in both cases
among the orbit divisors induced by the 48 curves $\Delta_f$, $f\in H$ there  is a  $(-1)$-curve.

 By  Proposition \ref{bound_curves}, there are no further $(-1)$-curves, whence contracting it we get the minimal model $X_{min}$ of $X$: $K_{X_{min}}^2=5$.

\

\textbf{Case 3, $p_g(X)=q(X)=1$, $K^2_X=2$.} 
This case is analogous to Case 2: all families are 
determined by a group $G$ of order 64,  a subgroup $G^0<G$ of order 32 and a generating vector of type $[1;2,2]$ for $G^0$, 
and there exists a degree 2 extension $H$ of $G^0$ acting as a group of automorphism of $C$.

The MAGMA script shows that in each case among the orbit divisors induced by the 64 curves $\Delta_f$, $f\in H$ there  are two $(-1)$-curves.

 By  Proposition \ref{bound_curves}, there are no further $(-1)$-curves, whence contracting them we get the minimal model $X_{min}$ of $X$: $K_{X_{min}}^2=4$.
\end{proof}

\begin{remark}\label{rmk:OD2}
It is worth mentioning that the technique of  orbit-divisors used in the previous proof  also works  in the case $p_g(X)=q(X)=2$.

 In \cite[Section 7.1]{CF18} we explicitly construct a $(-1)$-curve on each semi-isogenous mixed surface with $p_g(X)=q(X)=2$, $K^2_X=4$.
It turns out that the idea used there is a rough application of the technique of the orbit-divisors.

  If $K^2_X=2$ the existence of two $(-1)$-curves is predicted by Debarre's inequality (see \cite{deb82}).
  Nevertheless we can explicitly construct them. How? 

Looking at \cite[Table 3]{CF18}, all families with  $q(X)=2$
are determined via a generating vector of type $[2;-]$, i.e. the covering $c_1:C\to C/G^0=C'$ is unbranched and   $g(C')=2$, hence $C'$ is hyperelliptic.
Following  Proposition \ref{prop:structure}, we check if the 
 hyperelliptic involution $\sigma:C'\to C'$ lifts to an automorphism of $C$, and it does lift by Remark \ref{rem:lift}.
  We obtain a Galois covering $C \to \mathbb P^1$  branched in 6 points (the images of the 6 Weierstra\ss \ points) with ramification index 2 and Galois group $H$:
 the subgroup of $\Aut(C)$ generated by $G^0$ and a lift of $\iota$.

Considering such a group $H$ as automorphism group of $C$ we find in each case the maximal numbers of exceptional curves of the first kind.

\end{remark}

\appendix
\section{Magma script}\label{script}

\begin{code_magma}
// This function counts the number of fixed points of f in H < Aut(C)
CountingIntersections:=function(f,seq,H)
int:=0;
for j in [1..#seq] do  	c:=0; K:=sub<H|seq[j]>;
	for g in H do 
		if f in  {g*k*g^-1: k in K}  then c:=c+1; 
	end if; end for; 
	int:=int+(c/#K); 
end for; return int;
end function;

// The algebraic data defining the family are 
G:=SmallGroup(256, 47930);
gv:=[ G.2 * G.5 * G.6 * G.7 * G.8, G.1 * G.3 * G.7, G.2 * G.3 * G.4 * G.5 * G.8, 
G.1 * G.2 * G.5 * G.6 * G.8, G.2 * G.4 * G.5 * G.6 ];

/* For the other family  replace with:
G:=SmallGroup(256, 45303);
gv:=[ G.1 * G.2 * G.3 * G.5 * G.6 * G.7 * G.8, G.3 * G.5 * G.6, G.3 * G.6, 
G.2 * G.4 * G.5 * G.6 * G.7, G.1 * G.3 * G.4 * G.5 ];*/

genus_1:=32; // g(C)-1
G0:=sub<G|gv>; t:=Rep({x: x in G | x notin G0}); // \tau'
O2:={x: x in G| x notin G0 and Order(x) eq 2};
R:={t*x: x in O2};   // the ramification curves represented by elements in G0

// We construct now the orbit divisors, distinguishing between branch and non-branch curves
Curves:=[]; P:=Set(G0);
P:=P diff R;// We discard the branch curves, since we know they are not rational

while not IsEmpty(P) do 	f:=Rep(P); //it represents the curve Delta_f, graph of the automorphism f of C
	Gamma:={t*h*t^-1*f*h^-1 : h in G0} join {t^2*h*f^-1*t*h^-1*t^-1 : h in G0};   // it is the G-orbit  of Delta_f
	Append(~Curves, Gamma); 	P:=P diff Gamma;
end while; //Each element in Curves represents an orbit divisor, which is not a branch curve

//For each orbit divisor D we compute  D^2 and K_X.D, and we save in Curves_1 the (-1)-curves
Curves_1:={};
for a in [1..#Curves] do  	Gamma:=Curves[a]; n:=#Gamma; 	self:= -2*n* genus_1; 	GK:=4*n*genus_1; 
	for g1 in Gamma do
		for g2 in Gamma diff {g1} do 
			int:=CountingIntersections(g1^-1*g2,gv,G0); self:=self+int;
		end for; 
		for r in RamCurves  do 
			intK:=CountingIntersections(g1^-1*r,gv,G0); GK:=GK-intK; 
	end for; end for;
	if self/#G eq -1 and GK/#G eq -1 then  Include(~Curves_1, a);	end if;
end for;

 #Curves_1; // it returns the number of (-1)-curves.
\end{code_magma}

\bibliographystyle{alpha}

\end{document}